\title[Denominators of special values count $\pi_*(L_{KU}S/\MakeLowercase{p})$.]{Denominators of special values of $\zeta$-functions count $KU$-local homotopy groups of mod $p$ Moore spectra.}
\author{A. Salch}
\begin{document}
\begin{abstract}
In this note, for each odd prime $p$, we show that the orders of the $KU$-local homotopy groups of the mod $p$ Moore spectrum are equal to denominators of special values of certain quotients of Dedekind zeta-functions of totally real number fields. With this observation in hand, we give a cute topological proof of the Leopoldt conjecture for those number fields, by showing that it is a consequence of periodicity properties of $KU$-local stable homotopy groups.
\end{abstract}
\maketitle
\tableofcontents

\section{Introduction.}

This note combines a calculation in stable homotopy theory from the 1970s with some number-theoretic ideas of Leopoldt and Carlitz from the 1950s. 
In~\cite{MR0198470}, J. F. Adams famously proved the following theorem:
\begin{theorem} {\bf (Adams.)} \label{adams thm} Let $n$ be a positive integer,
and let $\denom(\zeta(-n))$ be the denominator of the rational number $\zeta(-n)$ when written in reduced form. 
Then the image of the Whitehead $J$-homomorphism
\[ \mathbb{Z} \cong \pi_{4n+3}(SO) \stackrel{J}{\longrightarrow} \pi_{4n+3}^S(S^0)\]
is a cyclic group of order equal to the denominator of $\zeta(-2n-1)$, up to multiplication by a power of $2$.
\end{theorem}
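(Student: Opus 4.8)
The plan is to follow Adams' original strategy \cite{MR0198470}: pin $\mathrm{im}(J)$ down from below using the complex $e$-invariant --- which is where Bernoulli numbers, hence $\zeta$-values, enter --- and from above using the Adams conjecture. First I would set $m=n+1$, so that $4n+3=4m-1$ and, by Bott periodicity, $\pi_{4m-1}(SO)\cong\mathbb{Z}$; fix a generator $\gamma$. Euler's functional equation gives $\zeta(-2n-1)=-B_{2m}/(2m)$, so $\denom(\zeta(-2n-1))=\denom(B_{2m}/(2m))$, which differs from $M(2m):=\denom(B_{2m}/(4m))$ only by a power of $2$. Since $\mathrm{im}(J)$ is a finite quotient of $\mathbb{Z}$ it is automatically cyclic, so it suffices to show its order equals $M(2m)$ up to a power of $2$.

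\textbf{Lower bound: the $e$-invariant.} Next I would build the homomorphism $e_{\mathbb C}\colon\pi_{4m-1}^S(S^0)\to\mathbb{Q}/\mathbb{Z}$: for $N$ large and a representative $\alpha\colon S^{N+4m-1}\to S^N$, the mapping cone $C_\alpha=S^N\cup_\alpha e^{N+4m}$ has $\widetilde{KU}^0(C_\alpha)$ free of rank $2$, sitting in an extension of $\widetilde{KU}^0(S^N)\cong\mathbb{Z}$ by $\widetilde{KU}^0(S^{N+4m})\cong\mathbb{Z}$; comparing the Chern character of a lift of a generator of the quotient with integral cohomology produces a rational number well-defined modulo $\mathbb{Z}$, and this is $e_{\mathbb C}(\alpha)$. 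The crux is to compute $e_{\mathbb C}(J\gamma)$. For this I would identify $\gamma$ with a virtual bundle over $S^{4m}$, realize $C_{J\gamma}$ (up to suspension) as the Thom space of an honest representative bundle $\xi$, and use the $KU$-theory Thom class of $\xi$, whose Chern character is the ordinary Thom class $U_\xi$ times a Todd/$\widehat{A}$-type characteristic class; over $S^{4m}$ that class contributes a single extra term, with coefficient $B_{2m}/(4m)$ up to sign and a unit. This shows $e_{\mathbb C}(J\gamma)$ has exact order $M(2m)$ in $\mathbb{Q}/\mathbb{Z}$, up to a power of $2$, so $\mathrm{im}(J)$ surjects onto a cyclic group of that order and $M(2m)\mid|\mathrm{im}(J)|$ (up to a power of $2$).

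\textbf{Upper bound: the Adams conjecture.} For the reverse divisibility I would invoke the Adams conjecture (Quillen, Sullivan, Becker--Gottlieb) with $X=S^{4m}$: for every prime $\ell$ and $N\gg 0$, $\ell^N(\psi^\ell-1)$ annihilates the image of $\widetilde{KO}^0(S^{4m})$ in $J(S^{4m})$; since $\psi^\ell$ acts on $\widetilde{KO}^0(S^{4m})\cong\mathbb{Z}$ by $\ell^{2m}$, this forces $J\big(\ell^N(\ell^{2m}-1)\gamma\big)=0$, so $J\gamma$ is annihilated by $\gcd_{\ell,N}\{\ell^N(\ell^{2m}-1)\}$. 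Finally I would cite the classical elementary lemma --- the von Staudt--Clausen congruences together with ``lifting the exponent'' applied to a primitive root modulo $p^2$ --- that this gcd equals $M(2m)$ up to a power of $2$: an odd prime $p$ divides it iff $(p-1)\mid 2m$, with exponent $1+v_p(2m)$, matching $v_p\!\left(\denom(B_{2m}/(4m))\right)$. Combining the two bounds yields $|\mathrm{im}(J)|=M(2m)$ up to a power of $2$, as claimed.

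\textbf{The hard part.} The main obstacle is the upper bound, i.e.\ the Adams conjecture itself: unlike the $e$-invariant computation and the number-theoretic gcd lemma, which are bookkeeping once the $K$-theory characteristic-class machinery is set up, the Adams conjecture is genuinely deep and was resolved only by substantially different methods (\'etale homotopy and Brauer lifting; Galois symmetry of profinite completions; transfer arguments). A secondary nuisance is the prime $2$, where $e_{\mathbb C}$ does not detect the full $2$-primary part of $\mathrm{im}(J)$ --- which is precisely why the order is only pinned down up to a power of $2$.
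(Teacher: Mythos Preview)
The paper does not give a proof of this statement at all: Theorem~\ref{adams thm} is stated in the introduction purely as background, attributed to Adams with a citation to~\cite{MR0198470}, and no argument is supplied. So there is no ``paper's own proof'' to compare your proposal against.

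That said, your outline is an accurate summary of the classical argument: the lower bound via the complex $e$-invariant and the Thom-class/Todd-genus computation giving $e_{\mathbb{C}}(J\gamma)\equiv \pm B_{2m}/(4m)\pmod{\mathbb{Z}}$ is exactly Adams' contribution in~\cite{MR0198470}, and the upper bound via the Adams conjecture (proved after Adams' paper by Quillen, Sullivan, and Becker--Gottlieb) together with the von Staudt--Clausen/primitive-root gcd lemma is the standard completion. Your identification of the Adams conjecture as the deep ingredient, and of the prime $2$ as the reason for the ``up to a power of $2$'' caveat, is also correct. One minor historical quibble: since the Adams conjecture was still open when~\cite{MR0198470} appeared, the theorem as stated here is really ``Adams plus the Adams conjecture,'' which you implicitly acknowledge.
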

Here $\zeta$ is the Riemann zeta-function, $\pi_{4n+3}(SO)$ is the $(4n+3)$rd (unstable) homotopy group of the infinite special orthogonal group, 
and $\pi_{4n+3}^S(S)$ is the $(4n+3)$rd stable homotopy group of the zero-sphere $S^0$.

Very closely related to the above result of Adams, one has Ravenel's computation (see~\cite{MR737778}, where it is mentioned that early versions of this computation were done by Adams and Baird):
\begin{theorem} {\bf (Ravenel.)}\label{adams-baird} Let $KU$ be periodic complex $K$-theory, and let $L_{KU}S$ be the Bousfield localization of the sphere spectrum $S$ at $KU$. 
Then, for all positive integers $n$,  
the order $\#(\pi_{2n}(L_{KU}S))$ of the $2n$th stable homotopy group $\pi_{2n}(L_{KU}S)$ is a power of $2$, and 
$\#(\pi_{2n-1}(L_{KU}S)) = \denom(\zeta(1-n))$ up to multiplication by a power of $2$.
\end{theorem}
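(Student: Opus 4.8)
The plan is to work one prime at a time, reduce to a computation with the $K(1)$-local sphere, and then match the output against the denominator of $\zeta(1-n)$ using the von Staudt--Clausen theorem. First I would invoke the arithmetic fracture square expressing $L_{KU}S$ in terms of its rationalization and the $p$-completions $(L_{KU}S)^\wedge_p \simeq L_{K(1)}S$ over all primes $p$. Since $KU_{\mathbb Q}$ is a wedge of suspensions of $H\mathbb Q$, the rationalization $L_{KU}S_{\mathbb Q}$ is equivalent to $H\mathbb Q \vee \Sigma^{-1}H\mathbb Q$; hence $\pi_i(L_{KU}S)\otimes\mathbb Q = 0$ for $i \neq 0,-1$, so for $n\geq 1$ the groups $\pi_{2n}(L_{KU}S)$ and $\pi_{2n-1}(L_{KU}S)$ are finite and decompose as the direct sum of their $p$-primary parts $\pi_{*}(L_{K(1)}S)$. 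Since the claim is asserted only up to powers of $2$, I would then ignore $p=2$ and work only at odd primes $p$.

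For $p$ odd the group $\mathbb Z_p^\times$ is procyclic, so choosing a topological generator $g$ yields a fiber sequence
\[ L_{K(1)}S \longrightarrow KU^\wedge_p \stackrel{\psi^g - 1}{\longrightarrow} KU^\wedge_p . \]
I would take this as the essential input; it is precisely the content of the Adams--Baird--Ravenel computation being quoted, and it follows either from the identification $L_{K(1)}S\simeq (KU^\wedge_p)^{h\mathbb Z_p^\times}$ via Goerss--Hopkins--Miller and Devinatz--Hopkins descent, or from the older analysis of the image-of-$J$ spectrum together with the stable Adams conjecture.

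Because $\pi_{2j}(KU^\wedge_p) = \mathbb Z_p$ with $\psi^g$ acting by $g^j$ while the odd homotopy of $KU^\wedge_p$ vanishes, the long exact sequence of this fiber sequence gives, for $n\neq 0$,
\[ \pi_{2n}(L_{K(1)}S) = \ker\!\bigl(g^n-1\colon \mathbb Z_p\to\mathbb Z_p\bigr) = 0, \qquad \pi_{2n-1}(L_{K(1)}S) = \mathbb Z_p/(g^n-1). \]
The vanishing of the even groups at every odd prime, combined with the first step, shows $\#(\pi_{2n}(L_{KU}S))$ is a power of $2$, and $\#(\pi_{2n-1}(L_{K(1)}S)) = p^{v_p(g^n-1)}$. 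A short $p$-adic computation, using that $g$ generates $\mathbb Z_p^\times\cong\mathbb Z/(p-1)\times\mathbb Z_p$, gives $v_p(g^n-1) = 1+v_p(n)$ when $(p-1)\mid n$ and $v_p(g^n-1)=0$ otherwise. On the number-theoretic side, $\zeta(1-n) = -B_n/n$ for $n$ even (and $\zeta(1-n)=0$ for odd $n>1$), and the von Staudt--Clausen theorem together with the $p$-integrality of $B_n/n$ for $(p-1)\nmid n$ gives precisely $v_p\bigl(\denom(\zeta(1-n))\bigr) = 1+v_p(n)$ when $(p-1)\mid n$ and $0$ otherwise. Comparing, $\#(\pi_{2n-1}(L_{K(1)}S)) = p^{v_p(\denom(\zeta(1-n)))}$ for every odd prime $p$, and assembling over all odd primes yields $\#(\pi_{2n-1}(L_{KU}S)) = \denom(\zeta(1-n))$ up to a power of $2$.

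I expect the one genuine obstacle to be the second step: the arithmetic fracture, the long exact sequence, and the zeta-value bookkeeping are formal manipulations or classical number theory, whereas producing the fiber sequence $L_{K(1)}S\to KU^\wedge_p\xrightarrow{\psi^g-1}KU^\wedge_p$ honestly requires descent for the $K(1)$-local category (or a delicate analysis of $\mathrm{Im}\,J$). Since that fiber sequence is exactly the computation being cited, however, I would be content to quote it and treat the rest as the short bookkeeping that converts it into the stated special-value formula.
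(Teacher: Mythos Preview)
The paper does not prove Theorem~\ref{adams-baird} at all: it is quoted as a result of Ravenel (with credit also to Adams and Baird) and cited to~\cite{MR737778}, with no argument supplied. So there is no ``paper's own proof'' to compare against. The closest thing in the paper is the proof sketch of Theorem~\ref{classical computation}, which computes $\pi_*(L_{KU}S/p)$ via the Devinatz--Hopkins descent spectral sequence $H^*_c(\hat{\mathbb{Z}}_p^\times;E(\mathbb{G}_1)_*(X))\Rightarrow\pi_*(L_{K(1)}X)$; your fiber sequence $L_{K(1)}S\to KU^{\wedge}_p\xrightarrow{\psi^g-1}KU^{\wedge}_p$ is exactly what that spectral sequence degenerates to when $X=S$ and one uses that $\hat{\mathbb{Z}}_p^\times$ is topologically cyclic for $p$ odd. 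In that sense your approach and the paper's method for the nearby theorem are the same computation packaged differently.

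Your argument itself is sound, with one cosmetic slip: the rationalization $(L_{KU}S)_{\mathbb{Q}}$ is $H\mathbb{Q}$, not $H\mathbb{Q}\vee\Sigma^{-1}H\mathbb{Q}$. (At an odd prime the chromatic fracture square gives $\pi_{-1}(L_{E(1)}S_{(p)})=0$ and $\pi_{-2}(L_{E(1)}S_{(p)})\cong\mathbb{Q}_p/\mathbb{Z}_p$, which is torsion.) This does not affect your conclusion, since you only use that $\pi_i(L_{KU}S)\otimes\mathbb{Q}=0$ for $i\geq 1$, which remains true. The rest of your bookkeeping---the valuation $\nu_p(g^n-1)=1+\nu_p(n)$ when $(p-1)\mid n$ and $0$ otherwise, and the matching via von~Staudt--Clausen and the $p$-integrality of $B_n/n$ for $(p-1)\nmid n$---is correct and is precisely the classical route to this result.
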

In Theorem~\ref{adams-baird} we are adopting the convention that the denominator of the rational number $0$ is $1$. (This matters since $\zeta(-n) = 0$ for all even positive integers $n$.)

To date, Theorem~\ref{adams-baird} is unique in the literature, as the only description of the orders of the homotopy groups of a Bousfield-localized finite spectrum in terms of special values of $L$-functions.
The purpose of this note is to give an infinite collection of new examples of this phenomenon, by extending Theorem~\ref{adams-baird} to a family of spectra other than the sphere spectrum: we prove that the orders of the groups $\pi_*(L_{KU}S/p)$, where $S/p$ is the mod $p$ Moore spectrum (i.e., the homotopy cofiber of the degree $p$ map $S \rightarrow S$), are also denominators of special values of a natural $L$-function, when $p>2$.  The work involved is not difficult, once one pinpoints what the correct $L$-function should be; the hard part in this project was simply finding that $L$-function to begin with.

The main result in this note (proven in Proposition~\ref{euler product for L(S/p)}, Theorem~\ref{S/p realizability}, and Corollary~\ref{S/p realizability of norms}) is as follows:
\begin{theorem}\label{main thm summary}
Let $p$ be an odd prime. %, and let $G_p$ be the set of prime numbers $\ell$ such that $\ell$  is a primitive root modulo $p^2$ (i.e., $\ell$ generates the group $(\mathbb{Z}/p^2\mathbb{Z})^{\times}$). Let $N_p$ be the set of prime numbers $\ell \neq p$ not contained in $G_p$. Then, for all complex numbers $s$ with real part $>1$, let $L(s,S/p)$ be the (convergent) Euler product \[ L(s,S/p) =\frac{1-p^{-s}}{\zeta(s)} \left( \prod_{\ell\notin G_p} \frac{1}{1 - \ell^{-s}}\right)^{p}\left(\prod_{\ell\in G_p} \frac{1}{1 - \ell^{-sp}}\right).\]
Then
%\begin{itemize}
%\item $L(s,S/p)$ admits an analytic continuation to a holomorphic function on the complex plane, 
%\item $L(s,S/p)$ has an expression as a Dirichlet series, namely, 
%\[ L(s,S/p) = \sum_{n\geq 1} \frac{\left(\bigast_{\chi \in \mathcal{o}(p)} \chi\right)(n)}{n^s},\]
%where $\mathcal{o}(p)$ is the set of nonprincipal $\mathbb{Q}(\zeta_p)$-valued Dirichlet characters of modulus $p^2$, and $\bigast_{\chi\in\mathcal{o}(p)}\chi$ is the Dirichlet convolution of the elements of $\mathcal{o}(p)$,
%\item $L(s,S/p)$ has a description in terms of Dedekind zeta-functions,
%namely,
%\[ L(s,S/p) =  \frac{\zeta_F(s)}{\zeta(s)} ,\]
%where $F/\mathbb{Q}$ is the (unique) minimal subextension of $\mathbb{Q}(\zeta_{p^2})/\mathbb{Q}$ in which $p$ is wildly ramified,
%\item $L(s,S/p)$ satisfies the functional equation \begin{equation*} L(n,S/p) = \left( \frac{2^{n-1}\pi^{n}}{p^{2n-1}(n-1)!}\right)^{p-1} L(1-n, S/p)  \end{equation*} for all positive even integers $n$,
%\item for all positive integers $n$, the complex number $L(1-n,S/p)$ is rational, and
%\item 
for all positive integers $n$, if we agree to write $\denom(x)$ for the denominator of a rational number $x$ when written in reduced form,
we have equalities
\begin{equation}\label{main thm equality 0349} \denom\left(\frac{\zeta_F(1-n)}{\zeta(1-n)}\right) = \#(\pi_{2n}(L_{KU}S/p)) = \#(\pi_{2n-1}(L_{KU}S/p)) ,\end{equation}
where $L_{KU}S/p$ is the Bousfield localization of the mod $p$ Moore spectrum $S/p$ at periodic complex $K$-theory $KU$, and
where $F/\mathbb{Q}$ is the (unique) minimal subextension of $\mathbb{Q}(\zeta_{p^2})/\mathbb{Q}$ in which $p$ is wildly ramified.
%\end{itemize}
\end{theorem}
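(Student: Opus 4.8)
The plan is to compute $\pi_*(L_{KU}S/p)$ directly from the cofiber sequence $S \xrightarrow{p} S \to S/p$ and the known computation of $\pi_*(L_{KU}S)$ from Theorem~\ref{adams-baird}, then match the answer against the special value $\denom(\zeta_F(1-n)/\zeta(1-n))$. Applying $L_{KU}(-)$ and taking homotopy groups yields a long exact sequence relating $\pi_*(L_{KU}S/p)$ to the groups $\pi_*(L_{KU}S)$ and the multiplication-by-$p$ map on them. Since we work one prime at a time, I would first $p$-localize everything; the power-of-$2$ ambiguity in Theorem~\ref{adams-baird} then disappears because $p$ is odd, and $\pi_{2n}(L_{KU}S)_{(p)}$ vanishes while $\pi_{2n-1}(L_{KU}S)_{(p)}$ is cyclic of order $v_p(\denom(\zeta(1-n)))$. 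From the long exact sequence one reads off that $\pi_{2n-1}(L_{KU}S/p)$ and $\pi_{2n}(L_{KU}S/p)$ are both built from the $p$-torsion of $\pi_{2n-1}(L_{KU}S)$ together with the cokernel/kernel of $p$ on the adjacent groups, and a short diagram chase gives that both have order equal to the $p$-part of $\denom(\zeta(1-n))$ times a correction coming from whether $p \mid \denom(\zeta(1-n))$; the Kummer-type formula for $v_p(\denom(\zeta(1-n)))$ in terms of $p$-adic $L$-functions / Bernoulli numbers is what enters here.

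Next I would identify the number field $F$. Since $F$ is the minimal subextension of $\mathbb{Q}(\zeta_{p^2})/\mathbb{Q}$ in which $p$ is wildly ramified, $F$ is the fixed field of the subgroup of $\mathrm{Gal}(\mathbb{Q}(\zeta_{p^2})/\mathbb{Q}) \cong (\mathbb{Z}/p^2\mathbb{Z})^\times$ of order $p-1$, so $[F:\mathbb{Q}] = p$ and $F/\mathbb{Q}$ is cyclic totally ramified at $p$. By the conductor–discriminant formula, $\zeta_F(s)/\zeta(s) = \prod_{\chi \neq 1} L(s,\chi)$ where $\chi$ runs over the nontrivial characters of $\mathrm{Gal}(F/\mathbb{Q})$, each of conductor $p^2$ and order $p$. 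The key arithmetic input is then the computation of $v_p\big(\denom(\zeta_F(1-n)/\zeta(1-n))\big) = v_p\big(\prod_\chi L(1-n,\chi)\big)$, which by the standard interpolation formula for $L(1-n,\chi)$ in terms of generalized Bernoulli numbers $B_{n,\chi}$ and by the analysis of $p$-adic valuations of these (the characters $\chi$ being of $p$-power order and conductor $p^2$) should be packaged via the $p$-adic $L$-function $L_p(s,\omega^?)$; this is essentially where the Leopoldt/Carlitz ideas referenced in the introduction are used, and it is the step I expect to require the most care.

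The final step is to check that this valuation agrees with $\#(\pi_{2n-1}(L_{KU}S/p))$ computed topologically. Concretely, I expect that $v_p(\denom(\zeta_F(1-n)/\zeta(1-n)))$ differs from $v_p(\denom(\zeta(1-n)))$ by exactly $1$ in the "extra" range — roughly, when $n \equiv 0 \pmod{p-1}$, reflecting the new $p$-divisibility contributed by the wildly ramified character — and that this $+1$ shift is exactly accounted for by the contribution of the class $\pi_{2n}(L_{KU}S/p) \cong \mathbb{Z}/p$ arising from the Bockstein on the bottom cell, while in all other degrees the Moore spectrum simply doubles (as a direct sum, not an extension) the torsion already present in $L_{KU}S$. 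Assembling these, both $\#(\pi_{2n}(L_{KU}S/p))$ and $\#(\pi_{2n-1}(L_{KU}S/p))$ equal $\denom(\zeta_F(1-n)/\zeta(1-n))$, as claimed.

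\smallskip

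\textbf{Main obstacle.} The genuinely hard part is the second paragraph: pinning down the correct $L$-function (here the quotient $\zeta_F/\zeta$ for precisely this $F$) and proving that the $p$-adic valuation of its special value matches the topological count. The topology — running the cofiber sequence long exact sequence — is routine; the number theory of generalized Bernoulli numbers attached to wildly ramified $p$-power characters, and showing it conspires to give exactly the periodicity-driven answer forced by $KU$-local homotopy theory, is where the real content lies, and is presumably also what powers the promised "cute proof" of Leopoldt.
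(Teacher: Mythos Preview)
Your overall strategy---compute both sides and match---is the paper's strategy, and your identification of $F$ and the factorization $\zeta_F(s)/\zeta(s) = \prod_{\chi \neq 1} L(s,\chi)$ over the order-$p$ characters of conductor $p^2$ is exactly what the paper does (its Proposition~\ref{L(-,S/p) is relative dedekind zeta}). But there is a concrete error in your expected answer on \emph{both} sides, and the two errors are consistent with each other, which is why your sketch looks like it closes.

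On the topological side, the cofiber-sequence computation does not give ``the $p$-part of $\denom(\zeta(1-n))$ times a correction,'' nor does the Moore spectrum ``double the torsion already present in $L_{KU}S$.'' From the short exact sequence
\[
0 \to \pi_m(L_{KU}S)_{(p)}/p \to \pi_m(L_{KU}S/p) \to \pi_{m-1}(L_{KU}S)_{(p)}[p] \to 0
\]
and the fact that $\pi_{2n-1}(L_{KU}S)_{(p)}$ is cyclic of order $p^{1+\nu_p(n)}$ when $(p-1)\mid n$ and zero otherwise (with $\pi_{2n}(L_{KU}S)_{(p)}=0$ for $n>0$), you get that $\pi_{2n}(L_{KU}S/p)$ and $\pi_{2n-1}(L_{KU}S/p)$ are each exactly $\mathbb{Z}/p$ when $(p-1)\mid n$ and zero otherwise. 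The exponent $1+\nu_p(n)$ disappears entirely: a cyclic $p$-group of any order has $p$-torsion and mod-$p$ quotient both equal to $\mathbb{Z}/p$. (The paper obtains the same answer by a different route, the descent spectral sequence of Theorem~\ref{classical computation}.)

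Correspondingly, on the arithmetic side, $\nu_p\bigl(\denom(\zeta_F(1-n)/\zeta(1-n))\bigr)$ does \emph{not} differ from $\nu_p(\denom(\zeta(1-n)))$ by a ``$+1$ shift'': the latter equals $1+\nu_p(n)$ when $(p-1)\mid n$, which is unbounded, while the former must be exactly $1$. So the quotient $\zeta_F/\zeta$ is not usefully compared to $\zeta$; its denominator has to be computed on its own terms. This is precisely what the paper does: it applies Carlitz's theorem (Theorem~\ref{carlitz's thm}) directly to the generalized Bernoulli numbers $B_n^{\chi}$ for $\chi$ primitive of conductor $p^2$ and order $p$, showing that $(1-\chi(1+p))\frac{B_n^\chi}{n}$ is a unit at the prime above $p$ in $\mathcal{O}_{\mathbb{Q}(\zeta_p)}$ when $(p-1)\mid n$, and then uses $\nu_p(1-\chi(1+p)) = \tfrac{1}{p-1}$ to conclude $\nu_p\bigl(\prod_\chi L(1-n,\chi)\bigr) = -1$. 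When $(p-1)\nmid n$, Carlitz gives integrality. Your second paragraph points at exactly the right input (generalized Bernoulli numbers for characters of $p$-power order and conductor $p^2$), but the computation you anticipate in your third paragraph is the wrong one.
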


%Each of the claims in Theorem~\ref{main thm summary} follow in a straightforward way from well-known classical results, except for the last claim, which establishes the connection to homotopy groups, and which takes a little bit more effort and a less well-known classical result (of Carlitz; see Theorem~\ref{carlitz's thm}, below) in order to prove.

%By the chromatic convergence theorem of Hopkins and Ravenel (see~\cite{MR1192553}), the stable homotopy groups of a finite spectrum $X$ are ``built'' (via a homotopy limit) from the stable homotopy groups of localizations of $X$ at the $p$-local Johnson-Wilson homology theories $E(n)$ for various heights $n$ and primes $p$; the $KU$-local homotopy groups of $X$ recover the $E(1)$-local homotopy groups at all primes. Consequently we consider the results of this note to be a step forward in a general program of describing the orders of the homotopy groups of localizations of finite spectra, and the orders of the elements in periodic families in the homotopy groups of finite spectra, in terms of special values of $L$-functions. As the stable homotopy groups of finite spectra are so complicated that they have (to date) completely resisted any attempts at a straightforward global description, we believe that methods which relate the orders of these groups to special values of $L$-functions are a plausible way forward for computational stable homotopy theory.

In Theorem \ref{leopoldt conj special case}, we prove the Leopoldt conjecture, at the prime $p$, for each of the number fields $F$ described in the statement of Theorem \ref{main thm summary}. These number fields are all abelian, so the Leopoldt conjecture was already proven for them, by the work of Baker and Brumer \cite{MR220694}. The proof we offer for Theorem \ref{leopoldt conj special case} has the curious feature that it deduces the relevant cases of the Leopoldt conjecture from Colmez's $p$-adic class number formula, and Theorem \ref{main thm summary}, and $v_1$-periodicity in stable homotopy groups. One naturally wants to know if other, perhaps nonabelian, cases of the Leopoldt conjecture can be verified using a similar topological approach. We describe the general approach in Observation \ref{how to try to prove leopoldt conj}:
\begin{unnumberedobservation} 
If $F$ is a totally real number field and if we have integers $j,k$ and spectra $E,X$ such that
\begin{enumerate}
\item the order of $\pi_{2(p^k-1)p^n - 1}(L_EX)$ is equal to the denominator of the rational number $\frac{\zeta_F(1-p^{n}(p^k-1))}{\zeta(1-p^{n}(p^k-1))}$, 
\item $X$ admits a self-map $\Sigma^{(2p^k-2)j}X \rightarrow X$ which induces an isomorphism in $E_*$-homology, and
\item $\pi_{-1}(L_EX)$ is finite,
\end{enumerate}
then the Leopoldt conjecture holds for $F$ at the prime $p$.
\end{unnumberedobservation}
The periodicity theorem of Hopkins-Smith (see \cite{MR1652975}, or Theorem 1.5.4 of \cite{MR1192553}) gives an ample supply of spectra $X$ satisfying the second of the three conditions; see Remark \ref{using observation on leopoldt} for some discussion.

In the appendix, \cref{computed examples appendix}, we give some examples of computed special values (including numerators) of $\zeta_F(1-n)/\zeta(1-n)$, and an amusing relationship between the orders of homotopy groups of $L_{KU}S/p$ and the probability that certain ``random'' collections of integers satisfy certain coprimality conditions.

\begin{remark}
There are various known and conjectured relationships between orders of algebraic $K$-groups of number rings, and special values of Dedekind zeta-functions, such as Lichtenbaum's conjecture, from \cite{MR0406981}. Since the algebraic $K$-group $K_i(R)$ of a ring $R$ is the homotopy group $\pi_i(\mathcal{K}(R))$ of the algebraic $K$-theory spectrum $\mathcal{K}(R)$, one naturally wants to know how Theorem \ref{main thm summary} fits with algebraic $K$-theory. The answer is this: it follows from Thomason's identification (in \cite{MR826102}) of $p$-complete $\pi_n\left(\left(L_{KU}\mathcal{K}(R)\right)^{\widehat{}}_p\right)$ with $p$-complete \'{e}tale $K$-theory $K^{\et}_n(R)^{\widehat{}}_p$, for $n\gg 0$, together with Quillen's calculation of the $K$-groups of finite fields in \cite{MR0315016}, that the $p$-completion of $L_{KU}S$ is homotopy-equivalent to the $p$-completion of $L_{KU}\mathcal{K}(\mathbb{F}_{\ell})$ for any prime $\ell$ which is a primitive root modulo $p^2$. So, with some effort, one can rewrite the homotopy groups appearing in Ravenel's theorem reproduced above as Theorem \ref{adams-baird} as algebraic $K$-groups. But $L_{KU}S/p$ is not homotopy-equivalent to the algebraic $K$-theory spectrum of any finite field, or any number ring, or any number field, even after $p$-completion, even after restricting attention to homotopy groups in degrees $\gg 0$. So the results of this note do not seem to fit cleanly into any known or conjectured relationships between algebraic $K$-groups and special values.
\end{remark}

\begin{history-and-status-of-this-paper}
I wrote most of this material in 2016, but never publicly posted it, because I had the sense that there ought to be a more compelling, deeper, and more generalizable way to prove the same results. It took a few years for me to learn enough Iwasawa theory to find that {\em better} proof of these results, a proof that generalizes far beyond the mod $p$ Moore spectra, for example. 
But perhaps there is some value in making this note publicly available, since I think the ideas are quite interesting, and they are presented here in a way that doesn't require the reader to make an investment in learning Iwasawa theory, and because some versions of this note were privately circulated and I have been asked about it by several people. 
So I hope the reader will forgive me for presenting in this note only a precursor of what I think must be the really {\em effective} techniques for relating orders of stable homotopy groups to special values of $\zeta$-functions. 
Below, in Remark \ref{sorry everybody}, I sketch how to prove the main result of this note using those more effective (Iwasawa-theoretic) techniques.

The author wants to emphasize that the proofs in this document are all pretty easy; the hard work involved in this project was finding the correct function \linebreak$L(s,S/p) = \zeta_F(s)/\zeta(s)$ and fields $F$. % (although it is relatively natural-looking, the author could not find anywhere where its special values have been studied in the existing literature). 
The homotopy groups $\pi_*(L_{KU}S/p)$ for $p>2$ are very simple (namely, $\pi_n(L_{KU}S/p)$ is isomorphic to $\mathbb{Z}/p\mathbb{Z}$ if $n$ is congruent to $0$ or $-1$ modulo $2p-2$, and is trivial otherwise), but ``handcrafting'' an $L$-function to have rational special values with specified denominators at negative integers is a nontrivial task: ``most'' $L$-functions (in the usual families: Dedekind, Hasse-Weil, Artin...) with rational special values at negative integers typically {\em vanish} at negative integers, and of those which do not vanish, most are {\em integral} at negative integers, and of those which have nonzero noninteger rational special values, most seem to follow the same pattern of denominators as the Riemann zeta-function. Finding $\zeta_F(s)/\zeta(s)$, and the particular number fields $F$ described in Theorem \ref{main thm summary}, took the author some work; but once you have the right idea for $F$ and the idea to study $\zeta_F(s)/\zeta(s)$, the pieces fall into place using established methods.
\end{history-and-status-of-this-paper}

\begin{remark}\label{sorry everybody}
The argument we present for \eqref{main thm equality 0349} in this note is simply that one computes the denominators of $\denom\left(\frac{\zeta_F(1-n)}{\zeta(1-n)}\right)$, one compares it to the (already computed) order of $\pi_{2n}(L_{KU}S/p)$ and of $\pi_{2n-1}(L_{KU}S/p)$, and one sees that they are equal. So it is important to ask: {\em is equation \eqref{main thm equality 0349} just a coincidence?} I think the really compelling argument that it {\em isn't} a coincidence comes from an Iwasawa-theoretic proof of \eqref{main thm equality 0349}, which proceeds by {\em not} computing both sides of the equation \eqref{main thm equality 0349}, but instead by showing that the input for the descent spectral sequence (described below, in \eqref{descent ss 1}) computing $\pi_*(L_{KU}S/p)$ is the cohomology of a certain unit group Iwasawa module, whose cohomology groups are also (by the totally real case of the Iwasawa main conjecture, as in \cite{MR1053488}) the denominators of the special values of $\zeta_F(s)/\zeta(s)$ at negative integers; then the vanishing of the differentials in the spectral sequence gives equality \eqref{main thm equality 0349}. That Iwasawa-theoretic argument is beyond the scope of this note, and I will have to present it elsewhere. Since that Iwasawa-theoretic argument requires much more knowledge of algebraic number theory than the more classical, Dirichlet-character-theoretic approach in this note, I believe this note will be far more readable to an audience of topologists than a paper which describes the more general and powerful Iwasawa-theoretic approach.
\end{remark}

I have tried to make this note readable for number theorists, but I think this note will still be most accessible to a reader which is, like the author, trained in homotopy theory but not in number theory. A ``crash course'' in the necessary results from number theory can be found in \cref{review from num thy}, and a briefer crash course on the relevant topological results in \cref{topology review section}.

%The author is also preparing (in another paper,~\cite{ttnt}) a general approach to the problem of describing orders of stable homotopy groups as special values of $L$-functions. 
%results on an extension of Theorem~\ref{adams-baird} to finite spectra with torsion-free homology, as well as some partial results involving $E(2)$-local rather than $KU$-local stable homotopy groups. 
%Some of the results in that paper rely on the ones in this note.
%As far as the author knows, there are no other known cases of the orders of the $KU$-local (or $E$-local, for any generalized homology theory $E$) homotopy groups of finite spectra admitting a description in terms of special values of $L$-functions.

It is a pleasure to thank R. Bruner for many fruitful conversations relating to this material, D. Ravenel for support and inspiration in studying connections between special values and orders of homotopy groups, and an anonymous referee for helpful comments. The computer algebra packages MAGMA and SAGE were also indispensable in making large-scale systematic calculations of special values that led me, eventually, to zero in on the correct families of $L$-functions and finally the correct definition of $L(s,S/p)$.

\begin{conventions}
Throughout, we write $S$ for the sphere spectrum, $\zeta$ for the Riemann zeta-function, and $\nu_p(x)$ for the $p$-adic valuation of a number $x$.
\end{conventions}

\section{Review of $KU$-localization and the $KU$-local mod $p$ Moore spectrum.}
\label{topology review section}

This section explains some well-known ideas from stable homotopy theory which we will use. An excellent reference for this material is \cite{MR737778}.
\begin{definition}
Fix a spectrum $E$.
\begin{itemize}
\item We say that a map of spectra $f: X\rightarrow Y$ is an {\em $E$-local equivalence} if $E\smash f$ is a weak equivalence. In other words: $f$ is an $E$-local equivalence if and only if $f$ induces an isomorphism $E_*(X) \rightarrow E_*(Y)$.
\item We say that a spectrum $X$ is {\em $E$-acyclic} if $E\smash X$ is contractible.
\item We say that a spectrum $X$ is {\em $E$-local} if, for each $E$-acyclic spectrum $Y$, every map of spectra $Y \rightarrow X$ is nulhomotopic.
\item We say that a map of spectra $f: X \rightarrow L_EX$ is {\em the $E$-localization map on $X$,} and we call $L_EX$ the {\em Bousfield $E$-localization of $X$}, if $L_EX$ is $E$-local and $f$ is an $E$-local weak equivalence.
\end{itemize}
\end{definition}
Uniqueness (up to weak equivalence) of the Bousfield $E$-localization of $X$ is not difficult to see.
More difficult is the theorem of Bousfield that the Bousfield $E$-localization of $X$ exists, for all $E$ and all $X$.\footnote{The approach to Bousfield localization we have presented here is close to the original 1970s approach, as summarized in~\cite{MR737778}. There are later approaches as well: if we work with a model category of spectra which satisfies appropriate set-theoretic conditions, then there exists a ``coarser'' model structure on that same underlying category of spectra, whose cofibrations are the same, and whose ``coarse'' weak equivalences are precisely the $E$-local weak equivalences. Fibrant replacement in this ``coarse'' model structure is Bousfield $E$-localization. The book~\cite{MR1944041} is a very good reference for this elegant approach.}

Bousfield localization is, among other things, an analogue (for spectra) of the familiar notion of localization in commutative algebra: if $H\mathbb{Z}_{(p)}$ is the Eilenberg-Mac Lane spectrum of the $p$-local integers (uniquely determined by the property that $\pi_0(H\mathbb{Z}_{(p)})\cong \mathbb{Z}_{(p)}$ and $\pi_n(H\mathbb{Z}_{(p)})\cong 0$ for all $n\neq 0$), then $\pi_*\left(L_{H\mathbb{Z}_{(p)}}X\right) \cong \pi_*(X)_{(p)}$ for all spectra $X$ whose homotopy groups are bounded below, and $\pi_*\left(L_{L_{H\mathbb{Z}_{(p)}}S}X\right)  \cong \pi_*(X)_{(p)}$ for all spectra $X$ (without any bound required on homotopy groups). So {\em some} Bousfield localizations (like the ones just described, which simply $p$-localize the homotopy groups) have a predictable effect on the homotopy groups of spectra. 

However, Bousfield localization $L_E$ typically has a much more subtle effect on homotopy groups when $E$ is a spectrum which admits a homotopy equivalence $\Sigma^n E\stackrel{\cong}{\longrightarrow} E$ for some $n>0$. The effect of such Bousfield localizations on homotopy groups is at the core of the approach to stable homotopy groups of spheres via periodic phenomena in the chromatic tower and/or the Adams-Novikov spectral sequence; see \cite{MR1192553} for a survey. Let's consider the simplest case, the case where $E$ is $KU$, the periodic complex $K$-theory spectrum. Here is a very well-known and classical computation, dating back to at least the earlier circulated versions of \cite{MR737778}:
\begin{theorem}\label{classical computation}
Let $p$ be an odd prime, and let $S/p$ be the mod $p$ Moore spectrum. 
Then there is an isomorphism of graded abelian groups\footnote{This is also an isomorphism of graded rings, but we do not give a proof of that additional fact, because we do not work with multiplicative structure in this note.}
\[ \pi_*(L_{KU}S/p) \cong E(\alpha_1)\otimes_{\mathbb{F}_p} \mathbb{F}_p[v_1^{\pm 1}],\]
where $E(\alpha_1)$ is an exterior $\mathbb{F}_p$-algebra on a single generator $\alpha_1$ in degree $2p-3$, and $v_1$ is in degree $2p-2$. 

Consequently $\pi_n(L_{KU}S/p) \cong \mathbb{F}_p$ if $n$ is congruent to $0$ or $-1$ modulo $2p-2$, and $\pi_n(L_{KU}S/p) \cong 0$ otherwise.
\end{theorem}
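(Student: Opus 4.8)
The plan is to reduce the statement to the homotopy long exact sequence of a single stable Adams operation. Since $S/p$ is a finite spectrum of exponent $p$, its $KU$-localization agrees with its $K(1)$-localization and is $p$-complete; this is classical and is discussed in \cite{MR737778}. I would then use the identification of the $K(1)$-local sphere as the homotopy fixed point spectrum $KU_p^{h\mathbb{Z}_p^\times}$, with $\mathbb{Z}_p^\times$ acting on $p$-complete $KU$ through the stable Adams operations $\psi^k$. Because $p$ is odd, the profinite group $\mathbb{Z}_p^\times\cong\mathbb{Z}/(p-1)\times\mathbb{Z}_p$ is topologically cyclic; choosing a topological generator $q$ and smashing the resulting fiber sequence $L_{K(1)}S \to KU_p \xrightarrow{\psi^q-1} KU_p$ with $S/p$ gives
\[ L_{KU}S/p \;\simeq\; \operatorname{fib}\!\bigl(\psi^q - 1 \colon KU/p \longrightarrow KU/p\bigr), \]
where $\pi_*(KU/p)\cong\mathbb{F}_p[\beta^{\pm 1}]$ with $|\beta|=2$.

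The computation then comes down to understanding $\psi^q-1$ on homotopy. On $\pi_{2n}(KU/p)\cong\mathbb{F}_p$ the operation $\psi^q$ is multiplication by $q^n$, so $\psi^q-1$ is multiplication by $q^n-1$; since $q$ generates $\mathbb{Z}_p^\times$, its reduction generates $(\mathbb{Z}/p)^\times$, and hence $q^n\equiv 1\pmod p$ exactly when $(p-1)\mid n$, while $\psi^q-1$ is an isomorphism on $\pi_{2n}(KU/p)$ otherwise. Running the long exact sequence of the fiber sequence, and using that $\pi_n(KU/p)=0$ for $n$ odd --- so that in even total degree $\pi_n(L_{KU}S/p)$ is the kernel of $\psi^q-1$ on $\pi_n(KU/p)$, and in odd total degree it is the cokernel of $\psi^q-1$ on $\pi_{n+1}(KU/p)$ --- yields $\pi_n(L_{KU}S/p)\cong\mathbb{F}_p$ when $n\equiv 0\pmod{2p-2}$ or $n\equiv -1\pmod{2p-2}$, and $0$ otherwise. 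Each homotopy group is an $\mathbb{F}_p$-vector space of dimension at most one, so there are no extension problems, and naming the generator detected by $\beta^{p-1}$ in degree $2p-2$ as $v_1$ and the degree-$(2p-3)$ generator as $\alpha_1$ gives the additive isomorphism $\pi_*(L_{KU}S/p)\cong E(\alpha_1)\otimes_{\mathbb{F}_p}\mathbb{F}_p[v_1^{\pm1}]$.

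The main obstacle is not a calculation but the structural input: one has to invoke the identification of $L_{K(1)}S$ with $KU_p^{h\mathbb{Z}_p^\times}$ (equivalently, the convergence of the relevant descent spectral sequence), which for $p$ odd goes back to Bousfield and to the Adams--Baird--Ravenel analysis recalled in \cite{MR737778}, and one has to keep the grading bookkeeping straight --- the cohomological-filtration-one contribution is shifted down by one in total degree, which is exactly what places $\alpha_1$ in degree $2p-3$ rather than $2p-2$. An alternative that sidesteps the homotopy-fixed-point language is to run the $KU$-based descent spectral sequence $H^s_{\mathrm{cont}}\bigl(\mathbb{Z}_p^\times;\pi_t(KU/p)\bigr)\Rightarrow\pi_{t-s}(L_{KU}S/p)$ directly: since $\mathbb{Z}/(p-1)$ has order prime to $p$ and $\mathbb{Z}_p$ has cohomological dimension one, the $E_2$-page is concentrated in filtrations $s=0,1$, each equal to the $\mathbb{Z}/(p-1)$-invariants $\mathbb{F}_p[\beta^{\pm(p-1)}]$, so the spectral sequence collapses for dimension reasons and produces the same answer. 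Either way, the only genuine input is the elementary observation that $q^n\equiv 1\pmod p$ if and only if $(p-1)\mid n$.
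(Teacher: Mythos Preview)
Your proposal is correct and is essentially the paper's approach: both identify $L_{KU}S/p\simeq L_{K(1)}S/p$ and then descend from $p$-complete $KU$ (the paper's $E(\mathbb{G}_1)$) along the $\hat{\mathbb{Z}}_p^{\times}$-action. Your primary argument packages this descent as the long exact sequence of the fiber of $\psi^q-1$, exploiting the topological cyclicity of $\hat{\mathbb{Z}}_p^{\times}$ for $p$ odd, while the paper runs the descent spectral sequence~\eqref{descent ss 1} and computes the continuous cohomology via the Lyndon--Hochschild--Serre splitting $\hat{\mathbb{Z}}_p^{\times}\cong(\mathbb{Z}/(p-1))\times(1+p\hat{\mathbb{Z}}_p)$; the ``alternative'' you sketch at the end is exactly the paper's argument.
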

\begin{proofsketch}
Here is one way (popularized by \cite{MR2030586}, where the ideas are generalized to formal groups of higher heights) to prove this result, which uses the spectral sequence
\begin{equation}\label{descent ss 1} H^*_c(\Aut(\mathbb{G}_1), E(\mathbb{G}_1)_*(X)) \Rightarrow \pi_*(L_{K(1)}X)\end{equation}
of~\cite{MR2030586}. (Here $H^*_c$ denotes profinite group cohomology, $\mathbb{G}_1$ is a formal group over $\mathbb{F}_p$ of height $1$, $E(\mathbb{G}_1)$ is its associated Morava/Lubin-Tate $E$-theory spectrum, and $K(1)$ is the first Morava $K$-theory at the prime $p$.)
It is classical (see e.g.~\cite{MR0172878}) that the profinite automorphism group $\Aut(\mathbb{G}_1)$ is isomorphic to the $p$-adic unit group $\hat{\mathbb{Z}}_p^{\times}$, and that (see e.g.~\cite{MR1333942}) $E(\mathbb{G}_1)_*$ is isomorphic to $\hat{\mathbb{Z}}_p[w^{\pm 1}]$ with $w$ in degree $-2$, with $\Aut(\mathbb{G}_1)$ acting on $\hat{\mathbb{Z}}_p\{ w^n\}$ by the $n$th power of the cyclotomic character, i.e., $u\cdot w^n$ is defined as the product $u^nw^n$.

Consequently $E(\mathbb{G}_1)_n(S/p)$ vanishes if $n$ is odd, and is isomorphic to $\mathbb{Z}/p\mathbb{Z}$ with $\Aut(\mathbb{G}_1) \cong \hat{\mathbb{Z}}_p^{\times}$ acting transitively if $n$ is even but not divisible by $2p-2$, and acting trivially if $n$ is divisible by $2p-2$. Easy Lyndon-Hochschild-Serre spectral sequence arguments then show that $H^*_c(\Aut(\mathbb{G}_1); E(\mathbb{G}_1)_n(S/p))$ vanishes unless $n$ is divisible by $2p-2$, and
\[ H^*_c(\Aut(\mathbb{G}_1); E(\mathbb{G}_1)_{n}(S/p)) \cong 
   H^*_c(1 + p\hat{\mathbb{Z}}_p; \mathbb{F}_p) \]
if $n$ is divisible by $2p-2$. Here $1 + p\hat{\mathbb{Z}}_p$ is the subgroup of $\hat{\mathbb{Z}}_p^{\times}$ consisting of units congruent to $1$ modulo $p$.

For $p>2$, convergence of the $p$-adic exponential map yields an isomorphism of profinite groups $p\hat{\mathbb{Z}}_p \stackrel{\cong}{\longrightarrow} 1 + p\hat{\mathbb{Z}}_p$, hence 
\[  H^j_c(1 + p\hat{\mathbb{Z}}_p; \mathbb{F}_p) 
 \cong \colim_m H^j_c(\mathbb{Z}/p^m\mathbb{Z}; \mathbb{F}_p) \]
is isomorphic to $\mathbb{F}_p$ if $j=0$ or $j=1$, and vanishes otherwise.
 
There is no room for differentials in spectral sequence~\eqref{descent ss 1}, so we get an isomorphism of graded abelian groups
\begin{equation*} \pi_*(L_{K(1)}S/p) \cong E(\alpha_1)\otimes_{\mathbb{F}_p} \mathbb{F}_p[v_1^{\pm 1}].\end{equation*}
with the degrees of $\alpha_1$ and $v_1$ as stated. 

Now since $S/p$ is already $S_{(p)}$-local, the $KU$-localization of $S/p$ coincides with the $KU_{(p)}$-localization of $S/p$. The well-known splitting
$KU_{(p)} \simeq \coprod_{j=0}^{p-2}\Sigma^{2j} E(1)$, where $E(1)$ is the $p$-local height $1$ Johnson-Wilson spectrum, establishes that $L_{KU_{(p)}}$
agrees with $L_{E(1)}$. The well-known homotopy pullback square\footnote{The existence of this homotopy pullback square seems to have been known since at least the 1980s, but as far as I know, there is no clear person or paper to whom the result is attributed. A nice modern writeup appears in Bauer's chapter \cite{bauer2011bousfield} in the book \cite{douglas2014topological}.}
\[\xymatrix{ L_{E(1)} X \ar[r]\ar[d] & L_{K(1)}X \ar[d] \\ L_{E(0)} X \ar[r] & L_{E(0)}L_{K(1)}X ,}\]
in the case $X = S/p$, then yields a weak equivalence $L_{E(1)}X \simeq L_{K(1)}X$, since $E(0)$-localization coincides with rationalization and so $L_{E(0)}S/p$ and $L_{E(0)}L_{K(1)}S/p$ are both contractible.
So 
\[ L_{KU}S/p \simeq L_{KU_{(p)}}S/p \simeq L_{E(1)}S/p \simeq L_{K(1)}S/p\]
has homotopy groups as stated.\hfill$\Box$
\end{proofsketch}

So the homotopy groups of $L_{KU}S/p$, for $p$ odd, are of a very simple form: $\pi_n(L_{KU}S/p)$ has order $p$ if $n$ is congruent to $0$ or $-1$ modulo $2p-2$, and $\pi_n(L_{KU}S/p)$ is trivial otherwise. To describe these groups in terms of special values of an $L$-function, as the work of Adams and Ravenel did (away from $2$) for $L_{KU}S$ as described in Theorem~\ref{adams-baird}, we need to find, for each odd prime, an $L$-function whose special values at negative integers are rational numbers whose denominators follow this same $(p-1)$-periodic pattern. We accomplish this in~\cref{l-function section}.

\section{Review of some ideas from number theory.}
\label{review from num thy}

\subsection{Review of Dirichlet characters and their $L$-functions.}

This section explains some well-known ideas from number theory which we will use. Excellent textbook references for this material include~\cite{MR0434929} and~\cite{MR1697859}.

The definition of a Dirichlet $L$-series and Dirichlet characters is classical:
\begin{definition}\label{def of l-series}
The {\em Dirichlet $L$-series} of a function $\chi: \mathbb{N} \rightarrow \mathbb{C}$ is the series
\begin{equation}\label{Lseries def} \sum_{n\geq 1}\frac{\chi(n)}{n^s}.\end{equation}
If $s$ is a complex number such that the series \eqref{Lseries def} converges, then we write $L(s,\chi)$ for the number that the series converges to\footnote{To be clear: for many functions $\chi$ of number-theoretic interest, the function $L(s,\chi)$ is meromorphic on some part of the complex plane, and admits a unique analytic continuation to a meromorphic function on a {\em larger} part of the complex plane. That analytic continuation is still called $L(s,\chi)$ for all $s$ in its domain, even though $L(s,\chi)$ only agrees with the series $\sum_{n\geq 1}\frac{\chi(n)}{n^s}$ for all complex $s$ such that the series $\sum_{n\geq 1}\frac{\chi(n)}{n^s}$ actually converges. For example: as we explain below, when $\chi$ is a Dirichlet character, the series $\sum_{n\geq 1}\frac{\chi(n)}{n^s}$ converges for all complex $s$ with $\Re(s)>1$, but it analytically continues to a meromorphic function on all of $\mathbb{C}$, and we write $L(-1,\chi)$ for that value of that meromorphic function at $s=-1$, even when the series $\sum_{n\geq 1}\frac{\chi(n)}{n^s}$ fails to converge when $s=-1$.}.

Given two functions $\chi_1,\chi_2: \mathbb{N}\rightarrow\mathbb{C}$, we define their {\em Dirichlet convolution} as the function $\chi_1\ast \chi_2: \mathbb{N}\rightarrow \mathbb{C}$ given by
\[ (\chi_1\ast \chi_2)(n) = \sum_{d\mid n} \chi_1(d)\chi_2(\frac{n}{d}),\]
so that $L(s,\chi_1\ast\chi_2) = L(s,\chi_1)L(s,\chi_2).$ (See Theorem~11.5 of~\cite{MR0434929} for a proof.)
\end{definition}

\begin{definition}\label{def of dirichlet l-series}
Let $f$ be a positive integer. A {\em Dirichlet character of modulus $f$} is a function $\chi: \mathbb{Z} \rightarrow \mathbb{C}$ satisfying the axioms:
\begin{itemize}
%\item $\chi(n)$ is a root of unity for all $n\in\mathbb{Z}$,
\item $\chi(1)=1$,
\item $\chi(n+f) = \chi(n)$ for all $n\in\mathbb{Z}$, 
\item $\chi(mn) = \chi(m)\chi(n)$ for all $m,n$, and
\item $\chi(n) = 0$ if $\gcd(n,f) \neq 1$.
\end{itemize}
A {\em Dirichlet character} is a Dirichlet character of modulus $f$ for some $f$.

The Dirichlet character $\chi_0$ of modulus $f$ such that $\chi_0(n) = 1$ for all $n$ coprime to $f$ is called the {\em principal} Dirichlet character of modulus $f$.

The {\em Dirichlet $L$-function} of a Dirichlet character $\chi$ is the Dirichlet $L$-series $\sum_{n\geq 1}\frac{\chi(n)}{n^s},$
which converges to a complex number $L(s,\chi)$ for all complex numbers $s$ with real part $>1$.
\end{definition}
For example, if $\chi_0$ is the (unique) character of modulus $1$, i.e., $\chi_0(n) = 1$ for all $n$, then $L(s,\chi_0) = \zeta(s)$, the Riemann zeta-function.

The Dirichlet characters of modulus $f$ form a group $\Dir(f)$ under pointwise multiplication; this group has order $\phi(f)$, and is cyclic if $f$ is a power of an odd prime. The Dirichlet characters of modulus $f$ do {\em not} form a group under Dirichlet convolution (see Definition~\ref{def of l-series}), since the Dirichlet convolution of two Dirichlet characters is not necessarily a Dirichlet character.

\begin{definition}
Let $\chi$ be a Dirichlet character of modulus $f$. A divisor $d$ of $f$ is called an {\em induced modulus for $\chi$} if $\chi(n) = 1$ for all $n$ relatively prime to $f$ such that $n \equiv 1$ modulo $d$.

A Dirichlet character $\chi$ of modulus $f$ is called {\em primitive} if the smallest induced modulus of $\chi$ is $f$ itself.
\end{definition}

Definition~\ref{def of gen berns} originally appeared in~\cite{MR0092812}.
\begin{definition} {\bf (Generalized Bernoulli numbers.)}\label{def of gen berns}
Let $\chi: \mathbb{Z}\rightarrow\mathbb{C}$ be a Dirichlet character of modulus $f$.
Let the sequence of numbers 
\[ B_1^\chi,B_2^\chi,B_3^\chi,\dots \in \mathbb{Q}(\zeta_{\phi(f)})\subseteq \mathbb{C}\] be defined as the Maclaurin coefficients
of $\sum_{r=1}^f \chi(r) \frac{te^{rt}}{e^{ft}-1}$, so that
\[ \sum_{r=1}^f \chi(r) \frac{te^{rt}}{e^{ft}-1} = \sum_{n\geq 0} B_n^\chi \frac{t^n}{n!} .\]
\end{definition}

The Euler product of $L(s,\chi)$ is classical:
\begin{equation}\label{euler product} L(s,\chi) = \prod_{\mbox{primes}\ p} \frac{1}{1 - \chi(p)p^{-s}}\end{equation}
for all complex numbers $s$ with $\Re(s)>1$.

See e.g. Theorem~VII.2.9 of~\cite{MR1697859} for Theorem~\ref{gen berns and dir l}:
\begin{theorem}  \label{gen berns and dir l}
The Dirichlet $L$-series $\sum_{n\geq 1} \frac{\chi(n)}{n^s}$
admits a unique analytic continuation to a meromorphic function $L(s,\chi)$ on the complex plane, and a functional equation such that
\[ L(1-n, \chi) = \frac{-B_n^\chi}{n}\]
for positive integers $n$.

Specifically, if $\chi$ is a primitive Dirichlet character of modulus $f$, then
\begin{equation}\label{funct eq} L(1 - s, \chi) = \frac{f^{s-1}\Gamma(s)}{(2\pi)^s}\left( e^{-\pi is/2} + \chi(-1)e^{\pi is/2}\right) G(1,\chi) L(s,\overline{\chi}),\end{equation}
where $\Gamma$ is the classical gamma-function (so $\Gamma(n) = (n-1)!$ for positive integers $n$), $\overline{\chi}$ is the complex-conjugate Dirichlet character of $\chi$ (so $\overline{\chi}(n) = \overline{\chi(n)}$), and
$G(1,\chi)$ is the Gauss sum $\sum_{r=1}^f \chi(r) e^{2\pi ir/f}$.
\end{theorem}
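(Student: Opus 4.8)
The plan is to reduce everything to the Hurwitz zeta function $\zeta(s,x)=\sum_{n\geq 0}(n+x)^{-s}$. First I would sort the terms of $\sum_{n\geq 1}\chi(n)n^{-s}$ by the residue of $n$ modulo $f$; periodicity of $\chi$ gives, for $\Re(s)>1$,
\[ L(s,\chi)=f^{-s}\sum_{a=1}^{f}\chi(a)\,\zeta\!\left(s,\tfrac{a}{f}\right). \]
So it suffices to analytically continue $\zeta(s,x)$, to evaluate it at nonpositive integers, and to establish Hurwitz's functional equation for it.

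For the continuation I would use the classical Hankel-contour representation (Riemann's device, adapted to $\zeta(s,x)$ by Hurwitz)
\[ \zeta(s,x)=\frac{\Gamma(1-s)}{2\pi i}\int_{C}\frac{z^{s-1}e^{xz}}{1-e^{z}}\,dz, \]
where $C$ is a Hankel loop around the positive real axis lying in the strip $|\Im z|<2\pi$ and $z^{s-1}$ is given a suitable branch (so that $z^{s-1}=z^{-n}$ becomes single-valued at $s=1-n$). The integral is entire in $s$, so $\zeta(s,x)$ — hence $L(s,\chi)$ — extends to a meromorphic function on $\mathbb{C}$ with at worst the simple pole of $\Gamma(1-s)$ at $s=1$; since $\zeta(s,x)$ has residue $1$ there and $\sum_{a}\chi(a)=0$ for nonprincipal $\chi$, the function $L(s,\chi)$ is in fact entire unless $\chi$ is principal. (Uniqueness of the continuation is just the identity theorem.) To evaluate at $s=1-n$, collapse $C$ to a small circle about $0$; using $\Gamma(1-s)=(n-1)!$ and the Bernoulli generating function $\frac{ze^{xz}}{e^{z}-1}=\sum_{m\geq 0}B_{m}(x)\frac{z^{m}}{m!}$ — whence $\frac{z^{-n}e^{xz}}{1-e^{z}}=-\sum_{m\geq 0}B_{m}(x)\frac{z^{m-n-1}}{m!}$ — the residue at $z=0$ gives $\zeta(1-n,x)=-B_{n}(x)/n$. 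Next I would identify the generalized Bernoulli numbers: substituting $u=ft$ in the generating function of Definition~\ref{def of gen berns} and comparing Maclaurin coefficients yields $B_{n}^{\chi}=f^{n-1}\sum_{a=1}^{f}\chi(a)B_{n}(a/f)$, so that
\[ L(1-n,\chi)=f^{n-1}\sum_{a=1}^{f}\chi(a)\,\zeta\!\left(1-n,\tfrac{a}{f}\right)=-\frac{1}{n}f^{n-1}\sum_{a=1}^{f}\chi(a)B_{n}(a/f)=-\frac{B_{n}^{\chi}}{n}, \]
which is the claimed value formula at negative integers.

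Finally, for the explicit functional equation~\eqref{funct eq} — the step I expect to be the main obstacle — I would start from Hurwitz's formula
\[ \zeta(1-s,x)=\frac{\Gamma(s)}{(2\pi)^{s}}\left(e^{-\pi is/2}\sum_{n\geq 1}\frac{e^{2\pi inx}}{n^{s}}+e^{\pi is/2}\sum_{n\geq 1}\frac{e^{-2\pi inx}}{n^{s}}\right), \]
which itself comes out of the Hankel representation by expanding $C$ to a large circle and summing the residues at $z=2\pi ik$ (equivalently, by Poisson summation). Setting $x=a/f$, multiplying by $\chi(a)$, and summing over $a$, the inner exponential sums become Gauss sums, and primitivity of $\chi$ enters through the separation identity $\sum_{a=1}^{f}\chi(a)e^{2\pi ina/f}=\overline{\chi}(n)\,G(1,\chi)$, valid for \emph{every} integer $n$ precisely when $\chi$ is primitive. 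Using also $\overline{\chi}(-1)=\chi(-1)$, this collapses both inner sums into multiples of $L(s,\overline{\chi})$ and gives
\[ L(1-s,\chi)=f^{s-1}\sum_{a=1}^{f}\chi(a)\,\zeta\!\left(1-s,\tfrac{a}{f}\right)=\frac{f^{s-1}\Gamma(s)}{(2\pi)^{s}}\left(e^{-\pi is/2}+\chi(-1)e^{\pi is/2}\right)G(1,\chi)\,L(s,\overline{\chi}). \]
The genuinely delicate points are the constants in Hurwitz's formula, the exact normalization of the Gauss sum, and the parity factor $e^{-\pi is/2}+\chi(-1)e^{\pi is/2}$ (which is $2\cos(\pi s/2)$ when $\chi$ is even and $-2i\sin(\pi s/2)$ when $\chi$ is odd); I would guard against sign slips by specializing to $\chi=\chi_{0}$ to recover the classical functional equation of $\zeta$, and by checking~\eqref{funct eq} against $L(1-n,\chi)=-B_{n}^{\chi}/n$ for a small $n$.
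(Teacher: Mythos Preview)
Your proposal is correct and follows the classical Hurwitz zeta-function route. Note, however, that the paper does not supply its own proof of this theorem: it is stated as a classical result, with the general statement referred to Neukirch~\cite{MR1697859} (Theorem~VII.2.9) and the explicit functional equation~\eqref{funct eq} referred to Apostol~\cite{MR0434929} (Theorem~12.11). Your argument is essentially the one Apostol gives --- decompose $L(s,\chi)$ as a linear combination of Hurwitz zeta-functions, continue via the Hankel contour, evaluate $\zeta(1-n,x)=-B_n(x)/n$ and match with the definition of $B_n^{\chi}$, and then derive~\eqref{funct eq} from Hurwitz's formula together with the separability of Gauss sums for primitive $\chi$ --- so there is nothing to compare against the paper beyond saying that you have reconstructed the textbook proof it cites.
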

See e.g. Theorem~12.11 of~\cite{MR0434929} for equation~\eqref{funct eq}.
It is also classical that $L(s,\chi)$ is an entire function on the complex plane, if $\chi$ is nonprincipal; see e.g. Theorem 12.5 of \cite{MR0434929}.

\begin{observation}\label{vanishing of half the L-values}
If $\chi$ is a nonprincipal Dirichlet character with $\chi(-1) = 1$, then it is an easy exercise to show that the function $F(t) = \sum_{r=1}^f \chi(r) \frac{te^{rt}}{e^{ft}-1}$ satisfies $F(t) = F(-t)$, and hence that $B_n^{\chi} = 0$ for all odd positive integers $n$, hence that $L(1-n, \chi) = 0$ for all odd positive integers $n$.
\end{observation}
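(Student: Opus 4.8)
The plan is to verify the functional identity $F(t) = F(-t)$ by a direct manipulation of the defining sum, then read off the consequence for the Maclaurin coefficients of $F$, and finally invoke Theorem~\ref{gen berns and dir l}.

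First I would rewrite $F(-t)$. Replacing $t$ by $-t$ in the definition gives $F(-t) = \sum_{r=1}^{f} \chi(r)\frac{-te^{-rt}}{e^{-ft}-1}$; multiplying the numerator and denominator of each summand by $e^{ft}$ turns this into $\sum_{r=1}^{f} \chi(r)\frac{te^{(f-r)t}}{e^{ft}-1}$, since $e^{-ft}e^{ft}-e^{ft} = -(e^{ft}-1)$. Next I would reindex the sum by the substitution $r \mapsto f-r$, so that the summation variable runs over $0,1,\dots,f-1$. Periodicity of $\chi$ modulo $f$ together with the hypothesis $\chi(-1)=1$ gives $\chi(f-r) = \chi(-r) = \chi(-1)\chi(r) = \chi(r)$; and since $\chi$ is nonprincipal its modulus satisfies $f\geq 2$, so $\chi(0) = \chi(f) = 0$ and the terms at the two endpoints $r=0$ and $r=f$ vanish and may be freely inserted or deleted. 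Assembling these facts yields $F(-t) = \sum_{r=1}^{f} \chi(r)\frac{te^{rt}}{e^{ft}-1} = F(t)$, which is the asserted evenness of $F$.

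With the evenness of $F$ in hand, I would observe that $F$ is holomorphic in a neighbourhood of $t=0$: the apparent pole coming from $e^{ft}-1$ in the denominator is cancelled by the factor $t$ in the numerator, so the expansion $F(t) = \sum_{n\geq 0} B_n^\chi \tfrac{t^n}{n!}$ is a genuine Maclaurin series. An even holomorphic function has vanishing odd-order Maclaurin coefficients, whence $B_n^\chi = 0$ for every odd positive integer $n$. Finally, Theorem~\ref{gen berns and dir l} supplies $L(1-n,\chi) = -B_n^\chi/n$ for all positive integers $n$ and all Dirichlet characters $\chi$ (no primitivity hypothesis is needed for this part of that theorem), so $L(1-n,\chi) = 0$ for every odd positive integer $n$.

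I do not expect any serious obstacle here: as the statement itself advertises, this is an exercise. The only points demanding a little care are the bookkeeping in the reindexing step --- in particular tracking which endpoint terms vanish because $\chi(0) = \chi(f) = 0$, which is where the nonprincipality of $\chi$ (forcing $f \geq 2$) enters --- and the remark that $F$ is holomorphic at the origin, so that ``Maclaurin coefficient'' is meaningful and the passage from evenness of $F$ to vanishing of $B_n^\chi$ for odd $n$ is legitimate.
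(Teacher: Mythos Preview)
Your argument is correct. The paper does not actually supply a proof of this observation---it simply labels it ``an easy exercise''---so there is nothing to compare against; you have written out exactly the standard verification the author had in mind. The reindexing $r\mapsto f-r$ combined with $\chi(-1)=1$ is the right move, and your remark that nonprincipality forces $f\geq 2$ (hence $\chi(0)=\chi(f)=0$) is precisely the bookkeeping needed to match the two endpoint conventions.
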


Since a Dirichlet character $\chi$ of modulus $f$ takes values in the $\phi(f)$th roots of unity, the numbers $B_n^\chi$ and $L(1-n,\chi)$ 
lie in the number field $\mathbb{Q}(\zeta_{\phi(f)})$.
Here is a theorem of Carlitz (see~\cite{MR0109132} for full proofs, or~\cite{MR0104630} for a shorter version) on how far these numbers are from being in the {\em ring of integers} of that number field:
\begin{theorem} \label{carlitz's thm}
Let $\chi$ be a primitive Dirichlet character of modulus $f$. 
\begin{itemize}
\item
If $f$ is not a prime power, then $\frac{B_n^\chi}{n}$ is an algebraic integer for all $n$.
\item 
If $f=p$ for some prime $p>2$, then let $g$ be a primitive root modulo $p^r$ for all $r$, i.e., $g\in\mathbb{N}$ is a topological generator of the group $\hat{\mathbb{Z}}_p^{\times}$ of $p$-adic units.
The number $\frac{B_n^\chi}{n}$ is an algebraic integer unless
$(p, 1 - \chi(g)) \neq (1)$, in which case $pB_n^{\chi} \equiv p-1$ modulo $(p, 1 - \chi(g))^{n+1}$. 
\item 
If $f=p^{\mu}$ for some prime $p>2$ and some integer $\mu>1$, then let $g$ be a primitive root modulo $p^r$ for all $r$, i.e., $g\in\mathbb{N}$ is a topological generator of the group $\hat{\mathbb{Z}}_p^{\times}$ of $p$-adic units.
The number $\frac{B_n^\chi}{n}$ is an algebraic integer unless
$(p, 1 - \chi(g)g^n) \neq (1)$, in which case $(1 -\chi(1+p))\frac{B_n^{\chi}}{n} \equiv 1$ modulo $(p, 1 - \chi(g)g^n)$. 
\item When $f$ is a power of a prime number $p$, then for all positive integers $n$,
$\frac{B_n^{\chi}}{n}\in \mathcal{O}_{\mathbb{Q}(\zeta_{\phi(f)})}[1/p]$.
That is, $\frac{B_n^{\chi}}{n}p^a$ is an algebraic integer for some (sufficiently large) positive integer $a$.
\end{itemize}
\end{theorem}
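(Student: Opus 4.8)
\noindent\emph{Proof sketch.} The plan is to estimate, one prime at a time, the $\lambda$-adic valuation of $\frac{B_n^\chi}{n}$ as $\lambda$ ranges over the primes of $\mathbb{Q}(\zeta_{\phi(f)})$. The workhorse is the closed formula one gets by expanding the generating function: writing $B_k$ for the ordinary Bernoulli numbers and $\sigma_j:=\sum_{a=1}^{f}\chi(a)a^j$ for the character power sums, one has $B_n^\chi = f^{n-1}\sum_{a=1}^{f}\chi(a)B_n(a/f) = \sum_{k=0}^{n}\binom{n}{k}B_k\,f^{k-1}\,\sigma_{n-k}$. Since $\chi$ is nonprincipal in all four cases ($f>1$), the term $k=n$ drops out because $\sigma_0=\sum_a\chi(a)=0$, and among the surviving terms only $k=0$ carries a negative power of $f$, namely $f^{-1}\sigma_n$. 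The two external inputs will be the von Staudt--Clausen theorem, which pins down the denominators of the $B_k$, and the elementary congruences satisfied by the $\sigma_j$, coming from Euler's theorem ($a^{\phi(\ell^N)}\equiv 1\bmod \ell^N$ when $\ell\nmid a$) and from the Chinese Remainder Theorem.

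First, for $\lambda$ over a prime $\ell\nmid f$, the factor $f^{k-1}$ is a $\lambda$-unit, so a summand $\binom{n}{k}B_k f^{k-1}\sigma_{n-k}$ can fail to be $\lambda$-integral only when $\ell-1\mid k$; for those $k$ a Kummer-type congruence among the $\sigma_{n-k}$ (using that $\sigma_j\bmod \ell^N$ is eventually periodic in $j$, once the $\ell\mid a$ terms have died) supplies the cancellation after dividing by $n$. Hence the denominator of $\frac{B_n^\chi}{n}$ is supported only over primes dividing $f$ --- in particular, for $f=p^\mu$ only $p$ can occur, which is the last bullet. Next, for $\lambda\mid\ell\mid f$ with $f$ \emph{not} a prime power, write $f=\ell^b m$ with $\ell\nmid m$, $m>1$, and $\chi=\chi_\ell\chi_m$ by CRT; primitivity of $\chi$ forces $\chi_m$ nonprincipal, so reducing a CRT lift modulo $\ell^b$ gives $\sigma_j \equiv \bigl(\sum\chi_\ell(a_\ell)a_\ell^j\bigr)\bigl(\sum\chi_m(a_m)\bigr) = 0\bmod \ell^b$. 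This absorbs the $f^{-1}$ in the $k=0$ term, and together with $\ell^b\mid f^{k-1}$ for $k\ge 2$ and von Staudt--Clausen for the remaining low term it makes every summand of $B_n^\chi$ itself $\lambda$-integral. Combining, $\frac{B_n^\chi}{n}\in\mathcal{O}_{\mathbb{Q}(\zeta_{\phi(f)})}$ whenever $f$ is not a prime power --- the first bullet.

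It remains to treat $\lambda\mid p$ when $f=p^\mu$, where the CRT cancellation above is unavailable. Here I would isolate, inside $B_n^\chi=\sum_k\binom{n}{k}B_k f^{k-1}\sigma_{n-k}$, the only piece that can put $p$ into a denominator after division by $n$: the term $p^{-\mu}\sum_a\chi(a)a^n$, together with the von Staudt--Clausen tails of the low terms (which turn out harmless). Splitting $\sum_a\chi(a)a^n$ over $a$ coprime to $p$ according to $a\bmod p$, and using that $\chi$ is determined by its values $\chi(g)$ on the prime-to-$p$ part and $\chi(1+p)$ on the pro-$p$ part of $\hat{\mathbb{Z}}_p^{\times}$, one finds that this sum keeps cancelling modulo the required power of $\lambda$ \emph{unless} the twisted character $\chi\omega^n$ (where $\omega$ is the Teichm\"uller character mod $p$) becomes trivial on the prime-to-$p$ part of $\hat{\mathbb{Z}}_p^{\times}$ --- equivalently, unless $\chi(g)g^n\equiv 1\bmod\lambda$, which is exactly the condition $(p,1-\chi(g)g^n)\neq(1)$ --- while for $\mu>1$ an extra factor $1-\chi(1+p)$ records the residual cancellation along $1+p\hat{\mathbb{Z}}_p$. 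When the condition fails, this shows $\frac{B_n^\chi}{n}\in\mathcal{O}_\lambda$; when it holds, one reads off the leading coefficient of the offending sum modulo powers of $\lambda$ --- the surviving constant being the von Staudt--Clausen value $-\tfrac1p$ of the ``principal-type'' piece that remains --- and obtains $pB_n^\chi\equiv p-1$ when $\mu=1$, and $(1-\chi(1+p))\tfrac{B_n^\chi}{n}\equiv 1$ when $\mu>1$, with the moduli as stated.

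The qualitative skeleton --- denominators supported only over the primes of $f$, and over those the only obstruction being the collapse of $\chi\omega^n$ on the tame part --- is short, and the non-prime-power case is genuinely easy once the CRT divisibility of the $\sigma_j$ is spotted. \textbf{The main obstacle is the exceptional-case bookkeeping of the previous paragraph}: isolating the precise offending sub-sum, tracking the exact power of $\lambda$ in the resulting congruence, and getting the constants right --- especially the factor $1-\chi(1+p)$, which is present only because for $\mu>1$ the character $\chi$ is genuinely (wildly) ramified along $1+p\hat{\mathbb{Z}}_p$. I would carry this out by a direct residue computation reconciled with the normalization of $B_n^\chi$ used here, sanity-checked on small cases such as $p=3$ with $f\in\{3,9\}$. (The same picture can be packaged via the Kubota--Leopoldt $p$-adic $L$-function: since $p\mid f$ kills the Euler factor at $p$, one has $\frac{B_n^\chi}{n}=-L_p(1-n,\chi\omega^n)$, so the exceptional denominators are precisely the failures of these $L_p$-values to be $p$-integral --- a phenomenon governed by the polar behaviour of the $p$-adic zeta function together with the subtler integral structure of $L_p$ for characters ramified only at $p$; but pinning down the precise residues through that lens still costs essentially the computation sketched above.)
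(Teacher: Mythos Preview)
The paper does not prove this theorem at all: it is stated as a quoted result of Carlitz, with the line ``Here is a theorem of Carlitz (see~\cite{MR0109132} for full proofs, or~\cite{MR0104630} for a shorter version)'' and no proof environment following. So there is nothing in the paper to compare your argument against; the author simply imports Carlitz's result as a black box and applies it in the proof of Theorem~\ref{S/p realizability}.

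That said, your sketch is very much in the spirit of Carlitz's original arguments. The expansion $B_n^\chi=\sum_{k}\binom{n}{k}B_k f^{k-1}\sigma_{n-k}$ together with von Staudt--Clausen and a prime-by-prime valuation analysis is essentially the classical route, and your CRT observation $\sigma_j\equiv 0\bmod \ell^b$ in the non-prime-power case is exactly the mechanism that makes that case easy. You are also right to flag the exceptional-case bookkeeping as the genuine work: in Carlitz's papers this is where the effort goes, and your identification of the governing condition as triviality of $\chi\omega^n$ on the tame quotient (i.e.\ $(p,1-\chi(g)g^n)\neq(1)$), with the extra $(1-\chi(1+p))$ factor arising from the wild part when $\mu>1$, matches the structure of the result. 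The one place your sketch is thin is the claim that for $\lambda\nmid f$ the Kummer-type congruences among the $\sigma_{n-k}$ ``supply the cancellation after dividing by $n$'': this is true but is not a one-liner, and if you were writing this up in full you would want to make that step explicit rather than gesture at it. Your alternative packaging via $L_p(1-n,\chi\omega^n)$ is also legitimate and is the modern way to understand the statement, though as you note it does not shortcut the residue computation.
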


\subsection{Review of Dedekind zeta-functions.}

This material is classical; see e.g. chapter~3 of~\cite{MR1421575}.

\begin{definition}
Let $F/\mathbb{Q}$ be a finite field extension with ring of integers $\mathcal{O}_F$.
Then the {\em Dedekind zeta-function of $F$} is defined as the series
\[ \sum_{I\subseteq \mathcal{O}_F} \frac{1}{\left( \#\mathcal{O}_F/I\right)^{s}},\]
where the sum is taken over all nonzero ideals $I$ of $\mathcal{O}_F$, and 
$\#\mathcal{O}_F/I$ is the number of elements in the residue ring $\mathcal{O}_F/I$.
This series converges for complex numbers $s$ with real part $\Re(s) > 1$, and uniquely analytically continues to a meromorphic function $\zeta_F(s)$ on the complex plane.
\end{definition} 

The function $\zeta_F$ has the Euler product
\[ \zeta_F(s) = \prod_{\mathfrak{p}\subseteq \mathcal{O}_F} \frac{1}{1 - \#\left( \mathcal{O}_F/\mathfrak{p}\right)^{-s}}\]
for $\Re(s) > 1$,
where the product is taken over all nonzero prime ideals $\mathfrak{p}$ of $\mathcal{O}_F$.
%The function $\zeta_F$ also admits analytic continuation to a meromorphic function $\mathbb{C}\rightarrow\mathbb{C}$.

\begin{definition}\label{def of assoc number field}
Let $f$ be a positive integer and let
$A$ be a subgroup of the group $\Dir(f)$ of Dirichlet characters of modulus $f$.
Let $\ker A$ denote the subgroup of $(\mathbb{Z}/f\mathbb{Z})^{\times}$ consisting of those residue classes $x$
such that $\chi(x) = 1$ for all $\chi\in A$.
Finally, let $G$ denote the subgroup of $\Gal(\mathbb{Q}(\zeta_f)/\mathbb{Q})$ corresponding to $\ker A \subseteq (\mathbb{Z}/f\mathbb{Z})^{\times}$ under the usual isomorphism $(\mathbb{Z}/f\mathbb{Z})^{\times} \stackrel{\cong}{\longrightarrow} \Gal(\mathbb{Q}(\zeta_f)/\mathbb{Q})$.
Then the {\em number field associated to $A$} is defined as the fixed field $\mathbb{Q}(\zeta_f)^{G}$.
\end{definition} 

Theorem~\ref{dedekind zeta and dirichlet L} combines Corollary 3.6 and Theorem 4.3 from \cite{MR1421575}.
\begin{theorem}\label{dedekind zeta and dirichlet L} 
Let $f$ be a positive integer, let
$A$ be a subgroup of the group $\Dir(f)$ of Dirichlet characters of modulus $f$, and let $F$ be the number field associated to $A$.
Then a prime $p\in\mathbb{Z}$ is unramified in $\mathcal{O}_F$ if and only if $\chi(p) \neq 0$ for all $\chi\in A$.
Furthermore:
\begin{equation}\label{washington formula} \zeta_F(s) = \prod_{\chi\in A} L(s, \chi).\end{equation}
\end{theorem}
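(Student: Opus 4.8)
The plan is to run everything through the standard dictionary between subgroups of $\Dir(f)$ and subfields of $\mathbb{Q}(\zeta_f)$, and then to verify \eqref{washington formula} by comparing the two Euler products one prime at a time. (Conceptually, \eqref{washington formula} is the Artin formalism $\zeta_F(s)=\prod_\rho L(s,\rho)^{\dim\rho}$ specialized to the abelian extension $F/\mathbb{Q}$, together with the identification of one-dimensional Artin $L$-functions over $\mathbb{Q}$ with primitive Dirichlet $L$-functions; but since the excerpt has already supplied the Euler products of $\zeta_F$ and of each $L(s,\chi)$, a direct comparison is cleaner and more self-contained.) One convention must be fixed at the outset: for $\chi\in A$ the symbol $L(s,\chi)$ in \eqref{washington formula}, and the value $\chi(p)$ in the ramification statement, must be read through the \emph{primitive} character inducing $\chi$ (of conductor $f_\chi\mid f$). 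This is the only interpretation under which the theorem is literally correct --- already with $f=p$ and $A=\{\chi_0\}$ the principal character mod $p$, one has $F=\mathbb{Q}$ but the literal $L(s,\chi_0)=(1-p^{-s})\zeta(s)\neq\zeta_{\mathbb{Q}}(s)$.

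Next I would record the duality. Under $(\mathbb{Z}/f\mathbb{Z})^{\times}\cong\Gal(\mathbb{Q}(\zeta_f)/\mathbb{Q})$ the subgroup $G$ of Definition~\ref{def of assoc number field} corresponds to $\ker A$, so $\Gamma:=\Gal(F/\mathbb{Q})\cong(\mathbb{Z}/f\mathbb{Z})^{\times}/\ker A$. Since $A$ is a \emph{subgroup} of $\Dir(f)\cong\widehat{(\mathbb{Z}/f\mathbb{Z})^{\times}}$, the double-annihilator theorem for finite abelian groups says that $A$ is exactly the set of characters of $(\mathbb{Z}/f\mathbb{Z})^{\times}$ trivial on $\ker A$, i.e.\ the character group of $\Gamma$; so $\chi\mapsto(\text{its primitive inducing character})$ identifies $A$ with $\widehat{\Gamma}$. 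For the ramification claim, write $f=p^{a}f_0$ with $\gcd(p,f_0)=1$; then $\mathbb{Q}(\zeta_{f_0})$ is the maximal subextension of $\mathbb{Q}(\zeta_f)/\mathbb{Q}$ unramified at $p$, so the inertia subgroup $I_p\le(\mathbb{Z}/f\mathbb{Z})^{\times}$ at $p$ is precisely the kernel of reduction $(\mathbb{Z}/f\mathbb{Z})^{\times}\to(\mathbb{Z}/f_0\mathbb{Z})^{\times}$. Hence $p$ is unramified in $\mathcal{O}_F$ iff $I_p\subseteq G=\ker A$, iff (double annihilator again) every $\chi\in A$ is trivial on $I_p$, iff every $\chi\in A$ has conductor coprime to $p$, iff the primitive character inducing $\chi$ is nonzero at $p$ for every $\chi\in A$ --- which is the assertion. (Alternatively one can invoke the conductor--discriminant formula $\mathrm{disc}(F)=\pm\prod_{\chi\in A}f_\chi$, but the inertia argument avoids it.)

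For \eqref{washington formula} itself, both $\zeta_F$ and each $L(s,\chi)$ are meromorphic on $\mathbb{C}$ (recalled in the excerpt) and their Euler products converge for $\Re(s)>1$, so by uniqueness of analytic continuation it suffices to match, for each prime $p$, the Euler factor of $\zeta_F$ at $p$ with $\prod_{\chi\in A}(1-\chi(p)p^{-s})^{-1}$. Let $e,\hat{f},g$ be the ramification index, residue degree, and number of primes of $F$ above $p$; the $\zeta_F$-factor is $\prod_{\mathfrak{p}\mid p}(1-p^{-\hat{f}s})^{-1}=(1-p^{-\hat{f}s})^{-g}$, independent of $e$. On the other side, split $A=A_0\sqcup A_1$ according to whether $p\mid f_\chi$ or not: the $\chi\in A_0$ have $\chi(p)=0$ and contribute trivial factors, while $A_1$ is exactly the group of characters of $\Gamma$ trivial on the image $\overline{I_p}$ of inertia, so $A_1\cong\widehat{\Gamma/\overline{I_p}}$, and for $\chi\in A_1$ one has $\chi(p)=\chi(\sigma)$ where $\sigma\in\Gamma/\overline{I_p}$ is a Frobenius over $p$, of order $\hat{f}$ and spanning a subgroup of index $g$. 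The group-theoretic identity
\[ \prod_{\chi\in\widehat{\Gamma/\overline{I_p}}}\left(1-\chi(\sigma)T\right)=\left(1-T^{\hat{f}}\right)^{g}, \]
proved by restricting characters to $\langle\sigma\rangle$ (where restriction is $g$-to-one onto $\widehat{\langle\sigma\rangle}$) and using $\prod_{j=0}^{\hat{f}-1}(1-\zeta_{\hat{f}}^{\,j}T)=1-T^{\hat{f}}$, then gives the match at every $p$ upon setting $T=p^{-s}$.

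The step I expect to be the main obstacle is the bookkeeping at ramified primes: one must check that the Frobenius $\sigma\in\Gamma/\overline{I_p}$ really has order equal to the residue degree $\hat{f}$ and generates a subgroup of index equal to the splitting number $g$ (this is where abelianness, and the precise relationship between decomposition and inertia subgroups, get used), and one must be careful that ``$\chi(p)$'' is always read through the primitive character so that the two sides are being compared correctly. The unramified case is then just the elementary character-sum identity displayed above, and the duality set-up is routine finite-abelian-group theory.
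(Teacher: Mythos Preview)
Your proof strategy is correct and complete: the duality between $A$ and $\widehat{\Gamma}$, the identification of the inertia subgroup at $p$ with the kernel of $(\mathbb{Z}/f\mathbb{Z})^\times\to(\mathbb{Z}/f_0\mathbb{Z})^\times$, and the prime-by-prime Euler product comparison via the identity $\prod_{\chi}(1-\chi(\sigma)T)=(1-T^{\hat f})^g$ are all standard and assembled accurately. Your explicit warning that $L(s,\chi)$ and $\chi(p)$ must be read through the primitive inducing character is exactly the point the paper singles out in the paragraph following the theorem.

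However, there is nothing to compare against: the paper does not prove Theorem~\ref{dedekind zeta and dirichlet L}. It is stated as a quotation, attributed to Corollary~3.6 and Theorem~4.3 of Washington's book \cite{MR1421575}, with the paper's only contribution being the clarifying remark about primitive representatives (which you independently flagged). Your argument is essentially the proof one finds in Washington, so in that sense you have reconstructed the cited source rather than diverged from it.
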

Theorem~\ref{dedekind zeta and dirichlet L} requires a bit of care; the version of it expressed as Theorem~4.3 in~\cite{MR1421575} leaves one small (but important for getting correct Euler factors at ramified primes) point unexplained. The point is that the product should be taken over {\em primitive} representatives for Dirichlet characters in the group $A$. For the sake of the present work, what this means is the following: if $p>2$ and we let $A$ be the group $\Dir(p^2)[p]$ of $p$-torsion elements in the group of Dirichlet characters of modulus $p^2$, then every nonidentity element in the group $\Dir(p^2)[p]$ is a primitive Dirichlet character, but the identity element in $\Dir(p^2)[p]$ is the principal Dirichlet character of modulus $p^2$, which is imprimitive. Formula~\eqref{washington formula} is valid if, for the $L$-function factor corresponding to the identity element of $\Dir(p^2)[p]$, we use the Dirichlet $L$-function of the {\em primitive}, and consequently modulus $1$, representative for that identity element; i.e., we use the Riemann zeta-function. If we had instead used the Dirichlet $L$-series of the (imprimitive) principal Dirichlet character of modulus $p^2$, then formula~\eqref{washington formula} would be off by an Euler factor at $p$.

\section{The $L$-function of the mod $p$ Moore spectrum.}
\label{l-function section}

\begin{definition}\label{def of p-dir(p^2)}
Let $p$ be an odd prime. Since the group $\Dir(p^2)$ of Dirichlet characters of modulus $p^2$ is cyclic of order $\phi(p^2) = p(p-1)$, there exists a unique subgroup of index $p-1$ in $\Dir(p^2)$. We will write $\Dir(p^2)[p]$ for this subgroup.
Since $\Dir(p^2)[p]$ consists of the Dirichlet characters $\chi$ of modulus $p^2$  such that $\chi(n)$ is a $p$th root of unity for all $n$, the Galois group $G_{\mathbb{Q}(\zeta_p)/\mathbb{Q}} \cong C_{p-1}$ acts on $\Dir(p^2)[p]$. %, and $\Dir(p^2)[p]$ is a $\mathbb{Z}[G_{\mathbb{Q}(\zeta_p)/\mathbb{Q}}]$-module.
\end{definition}

\begin{definition}\label{def of S/p L-series}
Let $\chi$ be a generator of $\Dir(p^2)[p]$, and
let $L(s,S/p)$ denote the product
\[ L(s,S/p) = \prod_{\sigma\in G_{\mathbb{Q}(\zeta_p)/\mathbb{Q}}} L(s,\chi^{\sigma}).\]

Since the set of Dirichlet characters $\{ \chi^{\sigma}\}_{\sigma\in G_{\mathbb{Q}(\zeta_p)/\mathbb{Q}}}$ is exactly the set of nontrivial elements (i.e., nonprincipal characters) in $\Dir(p^2)[p]$, the function $L(s,S/p)$ is independent of the choice of $\chi$. 

Since each $L(s,\chi^{\sigma})$ converges for all complex numbers $s$ with real part $s>1$, the same is true of $L(s,S/p)$. Since each $L(s,\chi^{\sigma})$ has analytic continuation to a meromorphic function on the complex plane, the same is true of $L(s,S/p)$.
\end{definition}

\begin{observation}\label{easy observations about L(-,S/p)}
Here are some easy observations about $L(s,S/p)$:
\begin{itemize}
\item Since each $\chi^{\sigma}$ is nonprincipal, $L(s,\chi^{\sigma})$ is entire, so the product $L(s,S/p)$ is entire.
\item The function $L(s,S/p)$ can be written as a single $L$-series, as follows: let $\mathfrak{o}(p)$ denote the set of elements of order exactly $p$ in the group $\Dir(p^2)$ of Dirichlet characters of modulus $p^2$, i.e., $\mathfrak{o}(p)$ is the set of nonzero elements of $\Dir(p^2)[p]$. Let $\bigast_{\chi \in \mathfrak{o}(p)} \chi$ denote the Dirichlet convolution (see Definition~\ref{def of l-series}) of the elements in $\mathfrak{o}(p)$. 
Then \[ L(s, S/p) = L\left(s, \bigast_{\chi \in \mathfrak{o}(p)} \chi\right)  = \sum_{n\geq 1} \frac{\left(\bigast_{\chi \in \mathfrak{o}(p)} \chi\right)(n)}{n^s}.\]
\item It follows immediately from Observation~\ref{vanishing of half the L-values} that $L(1-n, S/p) = 0$ for all odd positive integers $n$.
\item We have the action of $G_{\mathbb{Q}(\zeta_p)/\mathbb{Q}}$ on $\Dir(p^2)[p]$ described in Definition~\ref{def of p-dir(p^2)}, and $G_{\mathbb{Q}(\zeta_p)/\mathbb{Q}}$ of course acts on $\mathbb{Q}(\zeta_p)$, where the elements of $\Dir(p^2)[p]$ take their values. Using Definition~\ref{def of gen berns}, one can compute $B_n^{\chi}$, for any fixed value of $n$, by solving for Taylor coefficients in a way which only involves a finite sum, and in particular, finitely many applications of $\chi$. So $\chi$ being a field automorphism implies $B_n^{(\chi^{\sigma})} = (B_n^{\chi})^{\sigma}$, 
for any $\sigma\in G_{\mathbb{Q}(\zeta_p)/\mathbb{Q}}$, without needing to know anything about continuity of the Galois action.
\item In particular, since $\mathbb{Q}(\zeta_p)/\mathbb{Q}$ is Galois, 
\begin{align*} 
 L(1-n,S/p) 
  &= \prod_{\sigma\in G_{\mathbb{Q}(\zeta_p)/\mathbb{Q}}}L(1-n,\chi^{\sigma}) \\
  &= \prod_{\sigma\in G_{\mathbb{Q}(\zeta_p)/\mathbb{Q}}}L(1-n,\chi)^{\sigma} \\
  &= N_{\mathbb{Q}(\zeta_p)/\mathbb{Q}}( L(1-n,\chi) ),\end{align*}
the field norm of the extension $\mathbb{Q}(\zeta_p)/\mathbb{Q}$, evaluated at $L(1-n, \chi)$. Consequently $L(1-n,S/p)\in\mathbb{Q}$.
\end{itemize}
\end{observation}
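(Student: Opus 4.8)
The plan is to dispatch the five bullets in turn; each follows quickly from material already recalled above, the only genuinely structural input being the identification, recorded in Definition~\ref{def of S/p L-series}, of the Galois orbit $\{\chi^{\sigma}\}_{\sigma\in G_{\mathbb{Q}(\zeta_p)/\mathbb{Q}}}$ with the set of nonprincipal characters in $\Dir(p^2)[p]$. (Concretely: if $\sigma_a\colon \zeta_p\mapsto\zeta_p^a$ then $\chi^{\sigma_a}(n)=\sigma_a(\chi(n))=\chi(n)^a$, so $\chi^{\sigma_a}=\chi^a$, and as $a$ runs over $(\mathbb{Z}/p\mathbb{Z})^{\times}$ the powers $\chi^a$ exhaust the $p-1$ nonidentity elements of the cyclic group $\Dir(p^2)[p]$ of order $p$.) For the first bullet, each $\chi^{\sigma}$ is therefore nonprincipal, so $L(s,\chi^{\sigma})$ is entire by the cited Theorem~12.5 of~\cite{MR0434929}, and a finite product of entire functions is entire. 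For the second bullet, one iterates the convolution identity $L(s,\chi_1)L(s,\chi_2)=L(s,\chi_1\ast\chi_2)$ from Definition~\ref{def of l-series}: this gives $L(s,S/p)=\sum_{n\geq 1}\bigl(\bigast_{\chi\in\mathfrak{o}(p)}\chi\bigr)(n)\,n^{-s}$ for $\Re(s)>1$, the right-hand series converging there because a finite convolution of bounded functions is dominated by a $k$-fold divisor function; both sides analytically continue, so they agree as entire functions.

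For the third bullet I would first check that each $\chi\in\mathfrak{o}(p)$ satisfies $\chi(-1)=1$: since $-1$ is coprime to $p^2$ we have $\chi(-1)^p=1$, while $\chi(-1)^2=\chi(1)=1$, and $\gcd(p,2)=1$ then forces $\chi(-1)=1$; the same holds for every $\chi^{\sigma}$. Hence each factor $L(1-n,\chi^{\sigma})$ falls under Observation~\ref{vanishing of half the L-values} and vanishes for odd positive $n$, so the product $L(1-n,S/p)$ vanishes.

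For the fourth bullet, expanding $\frac{te^{rt}}{e^{ft}-1}$ — say via $\frac{t}{e^{ft}-1}=\frac1f\sum_{m}B_m\frac{(ft)^m}{m!}$ and $e^{rt}=\sum_k\frac{(rt)^k}{k!}$ — shows that for each fixed $n$ the generalized Bernoulli number of Definition~\ref{def of gen berns} has the shape $B_n^{\chi}=\sum_{r=1}^{f}c_{r,n}\,\chi(r)$ with $c_{r,n}\in\mathbb{Q}$ depending only on $r,n,f$. Since all values $\chi(r)$ are $p$-th roots of unity, hence lie in $\mathbb{Q}(\zeta_p)$, applying any $\sigma\in G_{\mathbb{Q}(\zeta_p)/\mathbb{Q}}$ term by term gives $\sigma(B_n^{\chi})=\sum_r c_{r,n}\,\sigma(\chi(r))=\sum_r c_{r,n}\,\chi^{\sigma}(r)=B_n^{\chi^{\sigma}}$; the finiteness of the sum is precisely what makes this formal, with no appeal to continuity of the Galois action.

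The fifth bullet then combines the fourth with $L(1-n,\psi)=-B_n^{\psi}/n$ from Theorem~\ref{gen berns and dir l}: as $n$ is $\sigma$-fixed, $L(1-n,\chi^{\sigma})=L(1-n,\chi)^{\sigma}$, and since $\mathbb{Q}(\zeta_p)/\mathbb{Q}$ is Galois with group $G_{\mathbb{Q}(\zeta_p)/\mathbb{Q}}$, taking the product over $\sigma$ yields $L(1-n,S/p)=\prod_{\sigma}L(1-n,\chi)^{\sigma}=N_{\mathbb{Q}(\zeta_p)/\mathbb{Q}}\bigl(L(1-n,\chi)\bigr)$, which lies in $\mathbb{Q}$ because the field norm of any element of a number field into $\mathbb{Q}$ is rational. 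I expect no real obstacle; the one place to take care is bookkeeping — keeping the modulus equal to $p^2$, the Galois group equal to $G_{\mathbb{Q}(\zeta_p)/\mathbb{Q}}\cong C_{p-1}$ rather than $G_{\mathbb{Q}(\zeta_{p^2})/\mathbb{Q}}$, and noting that $L(1-n,\chi)=-B_n^{\chi}/n$ genuinely lies in $\mathbb{Q}(\zeta_p)$ and not merely in the a priori larger field $\mathbb{Q}(\zeta_{\phi(p^2)})=\mathbb{Q}(\zeta_{p(p-1)})$ — all of which is forced by $\chi$ having exact order $p$.
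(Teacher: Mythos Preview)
Your proposal is correct and follows essentially the same approach as the paper, which presents these facts as an ``Observation'' with the justifications embedded directly in the bullets rather than as a separate proof. In fact you are slightly more careful than the paper in one place: for the third bullet you explicitly verify that $\chi(-1)=1$ (via $\chi(-1)^p=\chi(-1)^2=1$ and $\gcd(p,2)=1$), which is needed to invoke Observation~\ref{vanishing of half the L-values} but which the paper leaves implicit.
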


In Proposition~\ref{euler product for L(S/p)} we provide an Euler product formula for $L(s,S/p)$. The result is not number-theoretically novel at all: the method used is classical and very well-known.
The resulting formula %has the surprising property that it 
involves a {\em division} by $\zeta(s)$, and consequently the numerators in special values of $\zeta(s)$ contribute (in an indirect way) to denominators in special values of $L(s,S/p)$. In Theorem~\ref{S/p realizability} and Corollary~\ref{S/p realizability of norms} we prove that the special values of $L(s,S/p)$ agree with the orders of the $KU$-local stable homotopy groups of the mod $p$ Moore spectrum $S/p$; consequently {\em numerators} of Bernoulli numbers are entering (again, in an indirect way) into the orders of $KU$-local stable homotopy groups. 
\begin{prop}\label{euler product for L(S/p)}
Let $p>2$ be a prime, and let $G_p$ be the set of prime numbers $\ell$ such that $\ell$  is a primitive root modulo $p^2$ (i.e., $\ell$ generates the group $(\mathbb{Z}/p^2\mathbb{Z})^{\times}$).
Let $N_p$ be the set of prime numbers $\ell \neq p$ not contained in $G_p$. 
Then, for all complex numbers $s$ with real part $>1$, we have an equality
\begin{equation}\label{L(S/p) euler product} L(s,S/p) =\frac{1-p^{-s}}{\zeta(s)} \left(\prod_{\ell\in N_p} \frac{1}{1 - \ell^{-s}}\right)^{p}\left(\prod_{\ell\in G_p} \frac{1}{1 - \ell^{-sp}}\right).\end{equation}
\end{prop}
\begin{proof}
The product
of the Euler products \eqref{euler product} of the $L$-functions $L(s,\chi^{\sigma})$ over all $\sigma\in G_{\mathbb{Q}(\zeta_p)/\mathbb{Q}}$,
together with the Euler product of $\zeta(s) = L(s,\chi_0)$,
has three types of factors:
\begin{itemize}
\item the $L$-factor at $\ell$, for primes $\ell\in G_p$, is
\[ \zeta_{\ell}(s)L_{\ell}(s,S/p) = \frac{1}{1-\ell^{-s}}\left(\prod_{\sigma\in G_{\mathbb{Q}(\zeta_p)/\mathbb{Q}}} (1-\chi^{\sigma}(\ell)\ell^{-s})\right)^{-1}.\]
Since $\ell\in G_p$, the prime $\ell$ is a primitive root modulo $p^2$, hence $\chi(\ell)$ is a primitive $p$th root of unity.
Hence 
\[\frac{\ell^{ps}}{\zeta_{\ell}(s)L_{\ell}(s,S/p)} = \prod_{j=1}^{p}(\ell^{s}-\zeta_p^j) = (1 - \ell^s)\Phi_p(\ell^s) = 1 - \ell^{ps},\]
with $j$ a primitive $p$th root of unity, and where $\Phi_p$ is the $p$th cyclotomic polynomial.
Solving for $\zeta_{\ell}(s)L_{\ell}(s,S/p)$ yields $\frac{1}{1 - \ell^{-ps}}$.
\item For the $L$-factor at $\ell$, for primes $\ell\in N_p$,
we first observe that $\chi = \chi_{p^2}^{p-1}$ for an appropriately chosen primitive character $\chi_{p^2}$ of modulus $p^2$. Hence $\chi(n) = \chi_{p^2}(n)^{p-1}$ is a $p$th root of unity which is primitive if and only if $n$ is a primitive root modulo $p^2$. 
Consequently, if $\ell\in N_p$, then $\chi(\ell)$ is a non-primitive $p$th root of unity, i.e., $\chi(\ell) = 1$.
This lets us simplify an Euler factor:
\begin{align*} \zeta_{\ell}(s)L_{\ell}(s,S/p) 
 &= (1-\ell^{-s})^{-1}\left(\prod_{\sigma\in G_{\mathbb{Q}(\zeta_p)/\mathbb{Q}}} (1-\chi^{\sigma}(\ell)\ell^{-s})\right)^{-1}\\
 &= \left( 1-\ell^{-s}\right)^{-p}.\end{align*}
\item The third type of $L$-factor is simply the $\ell=p$ $L$-factor. Since $\chi(p) = 0$, the $p$-local $L$-factor in $\zeta(s)L(s,S/p)$ is simply the $p$-local $L$-factor in $\zeta(s)$, i.e., $(1-p^{-s})^{-1}$.
\end{itemize}
Taking a product over all primes $\ell$ yields the formula~\eqref{L(S/p) euler product}.
\end{proof}

\begin{lemma}\label{identification of fixed field}
The number field associated to the group $\Dir(p^2)[p]$ is the unique minimal %Galois 
subextension of $\mathbb{Q}(\zeta_{p^2})/\mathbb{Q}$ in which $p$ is wildly ramified. 
(See Definition~\ref{def of p-dir(p^2)} for the definition of $\Dir(p^2)[p]$, and see Definition~\ref{def of assoc number field} for the definition of the number field associated to a group of Dirichlet characters.)
\end{lemma}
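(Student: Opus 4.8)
The plan is to unwind Definition~\ref{def of assoc number field} in the case $A = \Dir(p^2)[p]$, pin down the corresponding subgroup of $\Gal(\mathbb{Q}(\zeta_{p^2})/\mathbb{Q})$ explicitly, and then recognize its fixed field as the promised subextension using elementary ramification theory.

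First I would identify $\ker A$. Since $\Dir(p^2)$ is cyclic of order $\phi(p^2) = p(p-1)$ and the evaluation pairing $(\mathbb{Z}/p^2\mathbb{Z})^{\times} \times \Dir(p^2) \longrightarrow \mathbb{C}^{\times}$, $(a,\chi)\mapsto \chi(a)$, is a perfect pairing of finite abelian groups, the annihilator $\ker A$ of the order-$p$ subgroup $A = \Dir(p^2)[p]$ has order $\phi(p^2)/p = p-1$, hence is the unique subgroup of order $p-1$ in the cyclic group $(\mathbb{Z}/p^2\mathbb{Z})^{\times}$; concretely, if $g$ generates $(\mathbb{Z}/p^2\mathbb{Z})^{\times}$ and $\chi$ is a character with $\chi(g)$ a primitive $p(p-1)$-th root of unity, then $\chi^{p-1}$ generates $A$ and $\chi^{p-1}(g^k)=1$ if and only if $p\mid k$, so $\ker A = \langle g^p\rangle$. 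Transporting through the usual isomorphism $(\mathbb{Z}/p^2\mathbb{Z})^{\times}\cong\Gal(\mathbb{Q}(\zeta_{p^2})/\mathbb{Q})$, the subgroup $G$ of Definition~\ref{def of assoc number field} is the unique index-$p$ subgroup, so the associated field $F=\mathbb{Q}(\zeta_{p^2})^G$ is the unique subextension of $\mathbb{Q}(\zeta_{p^2})/\mathbb{Q}$ of degree $p$ over $\mathbb{Q}$, and $F/\mathbb{Q}$ is cyclic.

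Next I would bring in ramification. Recall the classical fact that $p$ is the only prime ramifying in $\mathbb{Q}(\zeta_{p^2})/\mathbb{Q}$ and that it is totally ramified there, so its inertia subgroup is all of $\Gal(\mathbb{Q}(\zeta_{p^2})/\mathbb{Q})$. It follows that for every subgroup $H\leq \Gal(\mathbb{Q}(\zeta_{p^2})/\mathbb{Q})$ the prime $p$ is totally ramified in $\mathbb{Q}(\zeta_{p^2})^H/\mathbb{Q}$, with ramification index $[\Gal(\mathbb{Q}(\zeta_{p^2})/\mathbb{Q}):H]$; hence $p$ is \emph{wildly} ramified in $\mathbb{Q}(\zeta_{p^2})^H/\mathbb{Q}$ exactly when $p\mid [\Gal(\mathbb{Q}(\zeta_{p^2})/\mathbb{Q}):H]$, i.e. exactly when $|H|$ divides $p-1$. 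Since $\Gal(\mathbb{Q}(\zeta_{p^2})/\mathbb{Q})$ is cyclic, every subgroup of order dividing $p-1$ is contained in the unique subgroup of order $p-1$, namely $G$; so the subextensions of $\mathbb{Q}(\zeta_{p^2})/\mathbb{Q}$ in which $p$ is wildly ramified are precisely those containing $F=\mathbb{Q}(\zeta_{p^2})^{G}$, and $F$ itself is the smallest one, which is exactly the assertion of the lemma.

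The argument is almost entirely bookkeeping with cyclic groups and the perfectness of the evaluation pairing on Dirichlet characters. The one step that genuinely uses number theory rather than combinatorics is the claim that the inertia group of $p$ in $\mathbb{Q}(\zeta_{p^2})/\mathbb{Q}$ is the full Galois group: this is what lets me compute $e(p,\mathbb{Q}(\zeta_{p^2})^H/\mathbb{Q}) = [\Gal(\mathbb{Q}(\zeta_{p^2})/\mathbb{Q}):H]$ via multiplicativity of ramification indices in towers, and thereby reduce ``$p$ wildly ramified'' to the clean divisibility condition $p\mid[\Gal(\mathbb{Q}(\zeta_{p^2})/\mathbb{Q}):H]$.
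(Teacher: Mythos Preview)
Your argument is correct and complete. The paper itself records no proof beyond ``Elementary exercise,'' so there is no approach to compare against; you have simply carried out that exercise in full, correctly identifying $\ker A$ as the unique order-$(p-1)$ subgroup via the duality pairing, and then using total ramification of $p$ in $\mathbb{Q}(\zeta_{p^2})/\mathbb{Q}$ to translate ``wildly ramified'' into the divisibility condition $p\mid[\Gal(\mathbb{Q}(\zeta_{p^2})/\mathbb{Q}):H]$.
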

\begin{proof}
Elementary exercise.
\end{proof}
\begin{comment} (CORRECT, JUST NOT CURRENTLY USED)
Let $F$ denote the number field associated to the group $\Dir(p^2)[p]$.
By definition, $F$ is the fixed field of the unique index $p-1$ subgroup of $G_{\mathbb{Q}(\zeta_{p^2})/\mathbb{Q}}$ acting on $\mathbb{Q}(\zeta_{p^2})$, and
since $G_{\mathbb{Q}(\zeta_{p^2})/\mathbb{Q}}$ is abelian, this subgroup is normal.
So $F$ is a Galois subextension of $\mathbb{Q}(\zeta_{p^2})/\mathbb{Q}$ 
of degree $\phi(p^2)/(p-1) = p$.

Since $F/\mathbb{Q}$ is Galois, the ramification degree $e_p$ of $p$ in $\mathcal{O}_F$ divides $[F: \mathbb{Q}]$.
By Theorem~\ref{dedekind zeta and dirichlet L}, $p$ ramifies in $\mathcal{O}_F$,
so $e_p \neq 1$. So $e_p = p$, so $p$ ramifies wildly in $\mathcal{O}_F$.

Now suppose that $E/\mathbb{Q}$ is some other subextension of $\mathbb{Q}(\zeta_{p^2})/\mathbb{Q}$ in which $p$ ramifies wildly. Again, since $\mathbb{Q}(\zeta_{p^2})/\mathbb{Q}$ is abelian, $L/\mathbb{Q}$ is automatically Galois, so 
the ramification degree of $p$ in $\mathcal{O}_L$ divides $[L: \mathbb{Q}]$, which in turn must divide $[\mathbb{Q}(\zeta_{p^2}): \mathbb{Q}] = \phi(p^2)$; since $p$ ramifies wildly, we have that $p\mid [L: \mathbb{Q}]$. So the subgroup $G_{\mathbb{Q}(\zeta_{p^2})/L}$ of $G_{\mathbb{Q}(\zeta_p)/\mathbb{Q}}$ must have index dividing $p-1$, hence $G_{\mathbb{Q}(\zeta_{p^2})/L}$ contains the unique index $p-1$ subgroup of $G_{\mathbb{Q}(\zeta_{p^2})/\mathbb{Q}}$, hence $L$ contains $F$.
\end{comment}

\begin{prop}\label{L(-,S/p) is relative dedekind zeta}
Let $p$ be an odd prime, and let $F/\mathbb{Q}$ be the minimal subextension 
of $\mathbb{Q}(\zeta_{p^2})/\mathbb{Q}$ in which $p$ ramifies wildly. 
Then:
\begin{equation}\label{relative dedekind zeta formula} \frac{\zeta_F(s)}{\zeta(s)} = L(s,S/p).\end{equation}
\end{prop}
\begin{proof}
%Immediate from Theorem~\ref{dedekind zeta and dirichlet L} and Lemma~\ref{identification of fixed field}.
Among the $p$ elements of the group $\Dir(p^2)[p]$, there are $p-1$ primitive Dirichlet characters, i.e., there are $p-1$ nonprincipal $\mathbb{Q}(\zeta_p)$-valued Dirichlet characters of modulus $p^2$. 
In the group $\Dir(p^2)[p]$ we also have the one imprimitive character, namely, the principal Dirichlet character of modulus $p^2$, which is the identity element of the group $\Dir(p^2)[p]$. Lemma \ref{identification of fixed field} and Theorem \ref{dedekind zeta and dirichlet L} together let us express $\zeta_F(s)$ as a product over {\em primitive representatives} of the elements of $\Dir(p^2)[p]$. So let us write $\Dir(p^2)[p]^{\prime}$ for the set of primitive representatives for the elements of $\Dir(p^2)[p]$, and let us write $\chi_p$ for any generator for the group $\Dir(p^2)[p]$. The Galois group $G_{\mathbb{Q}(\zeta_p)/\mathbb{Q}}$ acts freely on the $\mathbb{Q}(\zeta_p)$-valued primitive Dirichlet characters of modulus $p^2$, so that 
\[ \Dir(p^2)[p]^{\prime} = \{ \chi_0 \} \cup \{ \chi_p^{\sigma}: \sigma \in G_{\mathbb{Q}(\zeta_p)/\mathbb{Q}}\},\]
where $\chi_0$ is principal and modulus $1$.
Now we have equalities:
\begin{align}
\label{eq b1} \frac{\zeta_F(s)}{\zeta(s)} 
  &= \frac{\prod_{\chi\in \Dir(p^2)[p]^{\prime}} L(s,\chi)}{\zeta(s)} \\
\nonumber  &= \frac{L(s,\chi_0)\cdot \prod_{\sigma \in G_{\mathbb{Q}(\zeta_p)/\mathbb{Q}}} L(s,\chi_p^{\sigma})}{\zeta(s)} \\
\label{eq b2}  &= \frac{L(s,\chi_0)\cdot L(s,S/p)}{\zeta(s)} \\
\label{eq b3}  &= L(s,S/p),
\end{align}
where \eqref{eq b1} is from Lemma \ref{identification of fixed field} and Theorem \ref{dedekind zeta and dirichlet L}, and \eqref{eq b2} is from Definition \ref{def of S/p L-series}. Of course \eqref{eq b3} comes simply from the observation that the Riemann zeta-function is the Dirichlet $L$-function of the principal Dirichlet character of modulus $1$.
\end{proof}
%(This observation, and many others in this note, are very standard in number theory; but I imagine that the bulk of the audience for this note will be homotopy theorists, for whom it may be helpful to spell out these standard number-theoretic observations explicitly.)

\begin{lemma}\label{easy ideal containment lemma}
Let $p>2$ be a prime, let $g\in\mathbb{N}$ be a primitive root modulo $p^2$, let $n$ be a positive integer divisible by $p-1$, 
let $\zeta_p$ denote a primitive $p$th root of unity, and let $\mathcal{O}_{\mathbb{Q}(\zeta_p)}$ denote the ring of integers in $\mathbb{Q}(\zeta_p)$.
Then the ideal $(1 - \zeta_p g^n)$ in $\mathcal{O}_{\mathbb{Q}(\zeta_p)}$ is contained in the ideal $(1 - \zeta_p)$.
\end{lemma}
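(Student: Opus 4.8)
The plan is to work in the ring of integers $\mathcal{O}_{\mathbb{Q}(\zeta_p)} = \mathbb{Z}[\zeta_p]$ and use the elementary structure of the prime $(1-\zeta_p)$ above $p$. Recall the well-known factorization $p = u(1-\zeta_p)^{p-1}$ for a unit $u$, so that $(1-\zeta_p)$ is the unique prime of $\mathbb{Z}[\zeta_p]$ over $p$, and the residue field $\mathbb{Z}[\zeta_p]/(1-\zeta_p)$ is $\mathbb{F}_p$. In particular $\zeta_p \equiv 1 \pmod{(1-\zeta_p)}$, so the reduction map $\mathbb{Z}[\zeta_p] \to \mathbb{F}_p$ sends $\zeta_p$ to $1$ and restricts to the usual reduction $\mathbb{Z} \to \mathbb{F}_p$ on integers. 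The containment $(1-\zeta_p g^n) \subseteq (1-\zeta_p)$ is equivalent to the single divisibility statement that $1 - \zeta_p g^n \in (1-\zeta_p)$, i.e.\ that $1-\zeta_p g^n$ reduces to $0$ in $\mathbb{F}_p$.

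First I would reduce $1 - \zeta_p g^n$ modulo $(1-\zeta_p)$: since $\zeta_p \mapsto 1$, this is $1 - 1 \cdot \bar g^{\,n} = 1 - \bar g^{\,n}$ in $\mathbb{F}_p$, where $\bar g$ is the image of $g$ in $\mathbb{F}_p^\times$. Now $g$ is a primitive root modulo $p^2$, hence in particular a primitive root modulo $p$, so $\bar g$ has order exactly $p-1$ in $\mathbb{F}_p^\times$. Since $n$ is divisible by $p-1$, we get $\bar g^{\,n} = 1$, so $1 - \bar g^{\,n} = 0$ in $\mathbb{F}_p$. This shows $1 - \zeta_p g^n \in (1-\zeta_p)$, and since $(1-\zeta_p g^n)$ is the principal ideal generated by this element, it is contained in $(1-\zeta_p)$, as desired.

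There is no real obstacle here; the only point requiring a moment's care is the passage from "$g$ is a primitive root mod $p^2$" to "$g$ has order $p-1$ in $\mathbb{F}_p^\times$," which is immediate because reduction modulo $p$ is surjective on unit groups $(\mathbb{Z}/p^2\mathbb{Z})^\times \twoheadrightarrow (\mathbb{Z}/p\mathbb{Z})^\times$ and a generator maps to a generator (the order of $\bar g$ divides $p-1$ and is a multiple of $(p-1)/\gcd$, but in fact equals $p-1$ since $(\mathbb{Z}/p^2\mathbb{Z})^\times$ has order $p(p-1)$ and the kernel of the reduction has order $p$). Alternatively one can simply cite that any primitive root mod $p^2$ is a primitive root mod $p$. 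The hypothesis that $n$ is divisible by $p-1$ is exactly what is needed and nothing stronger; the hypothesis on $\zeta_p$ being a primitive $p$-th root just fixes the embedding, and the statement is independent of that choice since all such are Galois-conjugate and $(1-\zeta_p)$ is Galois-stable.
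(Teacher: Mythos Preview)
Your proof is correct and follows essentially the same approach as the paper: reduce $1-\zeta_p g^n$ modulo the prime $(1-\zeta_p)$ using $\zeta_p\equiv 1$ and the residue field $\mathbb{F}_p$, then use $g^{p-1}\equiv 1\pmod{p}$ together with $(p-1)\mid n$. The only difference is cosmetic: the paper invokes Fermat's little theorem directly (needing only $g\not\equiv 0\pmod p$), whereas you argue the slightly stronger fact that $g$ is a primitive root modulo $p$, which is true but not required.
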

\begin{proof}
We will use the well-known factorization $(p) = (1-\zeta_p)^{p-1}$ in the ring of integers $\mathcal{O}_{\mathbb{Q}(\zeta_p)}$ (see e.g. Lemma~10.1 of~\cite{MR1697859}),
and its corollary, that $(1 - \zeta_p)$ is a maximal ideal in $\mathcal{O}_{\mathbb{Q}(\zeta_p)}$ with residue field $\mathbb{F}_p$.
Obviously $1 - \zeta_p g^n$ is congruent to $1 - g^n$ modulo $1 - \zeta_p$. Since $g$ is nonzero modulo $p$, we have $g^{p-1} \equiv 1$ in the residue field 
$\mathcal{O}_{\mathbb{Q}(\zeta_p)}/(1 - \zeta_p) \cong \mathbb{F}_p$. So $g^n \equiv 1$ modulo $(1 - \zeta_p)$, so $1 - \zeta_pg^n \equiv 0$ modulo $1 - \zeta_p$.
So $(1 - \zeta_pg^n) \subseteq (1 - \zeta_p)$.
\end{proof}

In Theorem~\ref{S/p realizability}, we adopt the convention that, given a rational number $x$, we write $\denom(x)$ for the denominator of $x$ when written in reduced form, and we let $\denom(x)$ be $1$ if $x=0$.
\begin{theorem}\label{S/p realizability} 
Let $p>2$ be a prime. 
Then, for each positive integer $n$, the following four numbers are equal:
\begin{itemize}
\item the order of $\pi_{2n}(L_{KU}S/p)$, the $(2n)$th stable homotopy group of the $KU$-local mod $p$ Moore spectrum,
\item the order of $\pi_{2n-1}(L_{KU}S/p)$, the $(2n-1)$th stable homotopy group of the $KU$-local mod $p$ Moore spectrum, 
\item $\denom(L(1-n,S/p))$, and
\item $\denom\left(\frac{\zeta_F(1-n)}{\zeta(1-n)}\right)$, where $F/\mathbb{Q}$ is the (unique) smallest subextension of \linebreak $\mathbb{Q}(\zeta_{p^2})/\mathbb{Q}$ in which $p$ is wildly ramified.
\end{itemize}
\end{theorem}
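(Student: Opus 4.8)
The plan is to prove that all four quantities equal a common value $c_n$, where $c_n = p$ if $p-1 \mid n$ and $c_n = 1$ otherwise. Two of the identifications are essentially immediate. By Theorem~\ref{classical computation}, $\pi_{2n}(L_{KU}S/p)$ and $\pi_{2n-1}(L_{KU}S/p)$ both have order $p$ when $2n \equiv 0 \pmod{2p-2}$, i.e. when $p-1\mid n$, and order $1$ otherwise; and by Proposition~\ref{L(-,S/p) is relative dedekind zeta}, $\zeta_F(s)/\zeta(s)$ and $L(s,S/p)$ are the same function, so $\denom\!\left(\zeta_F(1-n)/\zeta(1-n)\right) = \denom(L(1-n,S/p))$ (reading $\zeta_F(1-n)/\zeta(1-n)$, when $\zeta(1-n)=0$, as the value at $s=1-n$ of the entire function $L(s,S/p)$). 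So everything reduces to showing $\denom(L(1-n,S/p)) = c_n$.

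To that end, fix a generator $\chi$ of $\Dir(p^2)[p]$; since $\chi$ is a nonprincipal element of $\Dir(p^2)[p]$ it is a primitive Dirichlet character of modulus $p^2$ whose values are $p$th roots of unity, so $B_n^\chi\in\mathbb{Q}(\zeta_p)$. By Observation~\ref{easy observations about L(-,S/p)} and Theorem~\ref{gen berns and dir l},
\[ L(1-n,S/p) = N_{\mathbb{Q}(\zeta_p)/\mathbb{Q}}\!\left(L(1-n,\chi)\right) = N_{\mathbb{Q}(\zeta_p)/\mathbb{Q}}\!\left(-\tfrac{B_n^\chi}{n}\right). \]
Two cases fall out at once. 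If $n$ is odd, then $\chi(-1)=1$ (since $\chi(-1)$ is simultaneously a square root of unity and a $p$th root of unity), so $B_n^\chi = 0$ by Observation~\ref{vanishing of half the L-values}, whence $L(1-n,S/p)=0$ and $\denom = 1 = c_n$ (note $p-1$ is even, so $p-1\nmid n$). If $n$ is even but $p-1\nmid n$, then for a primitive root $g$ modulo $p^2$ we have $g^n\not\equiv 1\pmod p$, so $1-\chi(g)g^n$ is a unit modulo the prime $\mathfrak{p}=(1-\zeta_p)$ above $p$ (as $\chi(g)\equiv 1\pmod{\mathfrak{p}}$), and hence $(p,1-\chi(g)g^n)=(1)$; the clause of Carlitz's Theorem~\ref{carlitz's thm} treating $f=p^\mu$ with $\mu=2$ then gives that $B_n^\chi/n$ is an algebraic integer, so its norm $L(1-n,S/p)$ is a rational integer and $\denom = 1 = c_n$.

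The substantive case is $p-1\mid n$ (so $n$ is even), where we want $\denom(L(1-n,S/p))=p$. Let $v_{\mathfrak{p}}$ be the normalized valuation of the unique prime $\mathfrak{p}=(1-\zeta_p)$ of $\mathcal{O}_{\mathbb{Q}(\zeta_p)}$ above $p$, so $v_{\mathfrak{p}}(p)=p-1$ and the residue degree is $1$. Since $g$ is a primitive root modulo $p^2$, $\chi(g)$ is a primitive $p$th root of unity; likewise $\chi(1+p)$ is a primitive $p$th root of unity, since $\chi$ being primitive of modulus $p^2$ is equivalent to $\chi$ being nontrivial on the order-$p$ subgroup of $(\mathbb{Z}/p^2\mathbb{Z})^\times$ generated by $1+p$. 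Thus $v_{\mathfrak{p}}(1-\chi(1+p))=1$, and, writing $1-\chi(g)g^n = (1-g^n)+(1-\chi(g))g^n$ with $v_{\mathfrak{p}}(1-g^n)\geq p-1\geq 2 > 1 = v_{\mathfrak{p}}\!\left((1-\chi(g))g^n\right)$ (this is the computation behind Lemma~\ref{easy ideal containment lemma}), we get $v_{\mathfrak{p}}(1-\chi(g)g^n)=1$; combined with $v_{\mathfrak{p}}(p)=p-1\geq 1$ this gives $(p,1-\chi(g)g^n)=\mathfrak{p}$. Carlitz's congruence $(1-\chi(1+p))\tfrac{B_n^\chi}{n}\equiv 1\pmod{\mathfrak{p}}$ therefore says this element --- which a priori lies in $\mathcal{O}_{\mathbb{Q}(\zeta_p)}[1/p]$ by the last clause of Theorem~\ref{carlitz's thm} --- is a $\mathfrak{p}$-adic unit, i.e. $v_{\mathfrak{p}}\!\left((1-\chi(1+p))\tfrac{B_n^\chi}{n}\right)=0$; subtracting $v_{\mathfrak{p}}(1-\chi(1+p))=1$ forces $v_{\mathfrak{p}}(B_n^\chi/n)=-1$. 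Finally, since $\mathfrak{p}$ is the only prime above $p$ and has residue degree $1$, $\nu_p\!\left(N_{\mathbb{Q}(\zeta_p)/\mathbb{Q}}(-B_n^\chi/n)\right)=v_{\mathfrak{p}}(-B_n^\chi/n)=-1$, so $\denom(L(1-n,S/p))=p=c_n$.

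I expect the last paragraph to be where all the real work sits: the equalities in the first paragraph and the first two cases are bookkeeping, but the case $p-1\mid n$ requires extracting the \emph{exact} valuation $v_{\mathfrak{p}}(B_n^\chi/n)=-1$ from Carlitz's congruence, which means correctly identifying the ideal $(p,1-\chi(g)g^n)$ with $\mathfrak{p}$, pinning down $v_{\mathfrak{p}}(1-\chi(g)g^n)=v_{\mathfrak{p}}(1-\chi(1+p))=1$, and tracking that $B_n^\chi/n$ becomes $\mathfrak{p}$-integral only after multiplying by $1-\chi(1+p)$. An alternative to the Bernoulli-number route would be to substitute $s=1-n$ into the Euler product of Proposition~\ref{euler product for L(S/p)} and analyze the denominator produced by the $\zeta(s)^{-1}$ factor via the von Staudt--Clausen theorem applied to $\zeta(1-n)=-B_n/n$, but this seems no simpler and is less directly comparable with the statement.
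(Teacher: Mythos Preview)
Your proof is correct and follows essentially the same route as the paper: reduce via Theorem~\ref{classical computation} and Proposition~\ref{L(-,S/p) is relative dedekind zeta}, then split on whether $p-1\mid n$ and invoke Carlitz's Theorem~\ref{carlitz's thm} in the $f=p^2$ case to control the $\mathfrak{p}$-adic valuation of $B_n^\chi/n$.

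The one organizational difference worth noting is in the main case $p-1\mid n$. The paper multiplies the Carlitz congruences over all Galois conjugates $\chi^\sigma$ first and then reads off the $p$-adic valuation of the product $L(1-n,S/p)$ directly; you instead stay with a single $\chi$, extract the exact value $v_{\mathfrak{p}}(B_n^\chi/n)=-1$, and pass to $\nu_p$ via the norm identity $\nu_p(N_{\mathbb{Q}(\zeta_p)/\mathbb{Q}}(x))=v_{\mathfrak{p}}(x)$. Your version is arguably cleaner, and your identification $(p,1-\chi(g)g^n)=\mathfrak{p}$ is sharper than the paper's containment $(p,1-\chi(g)g^n)\subseteq(1-\zeta_p)$, though both suffice. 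One small point you leave implicit: the final step ``so $\denom(L(1-n,S/p))=p$'' uses not only $\nu_p=-1$ but also that no other prime divides the denominator; you have this, since you invoked $B_n^\chi/n\in\mathcal{O}_{\mathbb{Q}(\zeta_p)}[1/p]$, but it would be worth saying explicitly that the norm then lies in $\mathbb{Z}[1/p]$. Finally, the Euler-product alternative you float at the end does not work as stated, since \eqref{L(S/p) euler product} is only valid for $\Re(s)>1$ and cannot be evaluated at $s=1-n$.
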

\begin{proof}
By Theorem~\ref{classical computation}, $\pi_{2n}(L_{KU}S/p)\cong \pi_{2n-1}(L_{KU}S/p)\cong \mathbb{Z}/p\mathbb{Z}$ if $(p-1)\mid n$,
and $\pi_{2n}(L_{KU}S/p)$ and $\pi_{2n-1}(L_{KU}S/p)$ are trivial if $(p-1)\nmid n$, 
so there are just two cases to consider:
\begin{description}
\item[If $p-1\mid n$]
%We will use the well-known factorization $(p) = (1-\zeta_p)^{p-1}$ in the ring of integers $\mathcal{O}_{\mathbb{Q}(\zeta_p)}$.
\begin{comment} (FINE, BUT MOVED TO LEMMA)
More precisely, we have
\begin{align*} 
 p 
  &= \prod_{j=1}^{p} (1-\zeta_p^j) \\
  &= (1- \zeta_p)^p \prod_{j=1}^p \frac{1-\zeta_p^j}{1-\zeta_p} \\
  &= (1- \zeta_p)^p \prod_{j=1}^p \left(1 + \zeta_p + \dots + \zeta_p^{j-1}\right),
\end{align*}
and the sums $1 + \zeta_p + \dots + \zeta_p^{j-1}$ are known to be units in 
$\mathcal{O}_{\mathbb{Q}(\zeta_p)}$.
\end{comment}
The Dirichlet character $\chi$ of Definition~\ref{def of S/p L-series} 
has the property that $\chi(g)$ is a primitive $p$th root of unity, for any primitive root $g$ modulo $p^2$. 
So by Lemma~\ref{easy ideal containment lemma}, $(1 - \chi(g)g^n) \subseteq (1 - \zeta_p)$, 
so $(p, 1 - \chi(g)g^n) \subseteq (1 - \zeta_p)$ since $(1 - \zeta_p)^{p-1} = (p)$.
In particular, $(p, 1 - \chi(g)g^n)$ is contained in a maximal ideal of $\mathcal{O}_{\mathbb{Q}(\zeta_p)}$, so $(p, 1 - \chi(g)g^n) \neq (1)$.
(For this manipulation of ideals, it does not really matter which primitive $p$th root of unity $\zeta_p$ we choose, or whether or not it is actually equal to $\chi(g)$: there is a unique maximal ideal of $\mathcal{O}_{\mathbb{Q}(\zeta_p)}$ over $p$, and it is of the form $(1 - \zeta)$ for any primitive $p$th root of unity $\zeta$ we choose. So $(1 - \zeta_p) = (1-\zeta)$ for any primitive $p$th root of unity $\zeta$.)
\begin{comment}
On reduction modulo the ideal $(1-\zeta_p)$ in $\mathcal{O}_{\mathbb{Q}(\zeta_p)}$, 
we get that $1 - \chi(g)g^n$ is congruent to $1 - g^n$. So there exists some $a\in (1 - \zeta_p)$ such that
$1 - \chi(g)g^n = 1 - g^n + a$, and hence we have equalities in $\mathcal{O}_{\mathbb{Q}(\zeta_p)}$
\begin{align*}
 (1 - \chi(g)g^n)^{p} 
  &= (1 - g^n + a)^{p} \\
  &\equiv (1-g^n)^p + a^p \mod (p) \\
  &\equiv (1-g^n)^p  \mod (p) \\
  &\equiv 1 - g^{np} \mod (p) \\
  &\equiv 0 \mod (p) ,
\end{align*}
since $a^p\in (1-\zeta_p)^p = (p)(1-\zeta_p)$.

Consequently $(1 - \chi(g)g^n)^{p} \in (p)$. This implies that $1 - \chi(g)g^n$ is not a unit in $\mathcal{O}_{\mathbb{Q}(\zeta_p)}$, and also that
$p$ is of positive $(1 - \chi(g)g^n)$-adic valuation. Consequently any element of $\mathcal{O}_{\mathbb{Q}(\zeta_p)}$ which is not in 
$1 - \chi(g)g^n$ is also not in the ideal $(p, 1 - \chi(g)g^n)$. Since $1 - \chi(g)g^n$ is not a unit, $(p, 1 - \chi(g)g^n) \neq (1)\subseteq \mathcal{O}_{\mathbb{Q}(\zeta_p)}$.
\end{comment}

Now we invoke Carlitz's result, Theorem~\ref{carlitz's thm}: 
since $(p, 1 - \chi(g)g^n) \neq (1)$, we have that
$(1 -\chi(1+p))\frac{B_n^{\chi}}{n}$ is congruent to $1$ modulo $(p, 1 - \chi(g)g^n)$. 
Since $(p, 1 - \chi(g)g^n) \subseteq (1 - \zeta_p)$, we now have 
\begin{align}
\nonumber (1 - \chi(1+p))^{p-1}L(1-n,S/p) 
  &= (1 - \chi(1+p))^{p-1} \prod_{\sigma\in G_{\mathbb{Q}(\zeta_p)/\mathbb{Q}}} \frac{-B_n^{\chi^{\sigma}}}{n} \\
\label{pre-valuation equality}  &\equiv 1\mod (1 - \zeta_p),
\end{align}
and, on taking $p$-adic valuations of both sides of the equation~\eqref{pre-valuation equality} in $\mathcal{O}_{\mathbb{Q}(\zeta_p)}$, we have
\begin{equation}\label{valuation equality} (p-1)\nu_p\left( 1 - \chi(1+p)\right) + \nu_p(L(1-n, S/p)) = 0.\end{equation}

Now remember that $\chi$ is a generator of $\Dir(p^2)[p]$, and in particular, $\chi$ takes primitive $p$th roots of unity in $\mathbb{Z}/p^2\mathbb{Z}$ to primitive $p$th roots of unity. 
By a very easy elementary computation, $1+p$ is a primitive $p$th root of unity in $\mathbb{Z}/p^2\mathbb{Z}$; and since we have an equality of ideals $(1 - \zeta) = (1 - \zeta_p)$ for any primitive $p$th root of unity $\zeta$ in $\mathcal{O}_{\mathbb{Q}(\zeta_p)}$, we now have
$\nu_p(1 - \chi(1+p)) = \nu_p(1 - \zeta_p) = \frac{1}{p-1}$. 
Hence equation~\eqref{valuation equality} gives us that $\nu_p(L(1-n,S/p)) = -1$, i.e., the denominator of the rational number $L(1-n,S/p)$ is divisible by $p$, but not by $p^2$.

Carlitz's result, Theorem~\ref{carlitz's thm}, also implies that $\frac{p^aB_n^{\chi}}{n}\in \mathcal{O}_{\mathbb{Q}(\zeta_p)}$ for some sufficiently large integer $a$.
So the product $L(1-n,S/p) = \prod_{\sigma\in G_{\mathbb{Q}(\zeta_p)/\mathbb{Q}}}\frac{-B_n^{\chi}}{n}$ has denominator divisible by no primes other than the prime $p$.

Consequently the denominator of $L(1-n,S/p)$ is exactly $p$, when $p-1$ divides $n$.
\item[If $p-1\nmid n$] Let $g\in\mathbb{N}$ be a primitive root modulo $p^2$. 
We have the congruence $1 - \chi(g)g^n \equiv 1 - g^n$ modulo $1 - \zeta_p$, for the same reasons as in the previous part of this proof.

The difference is now that, since $p-1$ does not divide $n$, %$g^n$ is not a $p$th root of unity, and indeed, $g^n$ is either an element of order $p(p-1)$ or order $p-1$ in $(\mathbb{Z}/p^2\mathbb{Z})^{\times}$, depending on whether $p$ divides $n$ or not. So the modulo $p$ reduction of $g^n$ is an element of order $p-1$, i.e.,  (COMMENTED OUT BECAUSE IT'S NOT RIGHT)
$g^n \nequiv 1$ modulo $p$. So $1 - g^n \nequiv 0$ modulo $(1 - \zeta_p) \subseteq (p)$, so 
$(p, 1 - \chi(g)g^n) = (1)\subseteq \mathcal{O}_{\mathbb{Q}(\zeta_p)}$. 
Now Theorem~\ref{carlitz's thm} implies that $\frac{B_n^\chi}{n}$ is an algebraic integer.
Hence, taking a product over Galois conjugates, we have
\[ L(1-n, S/p) = \prod_{\sigma\in G_{\mathbb{Q}(\zeta_p)/\mathbb{Q}}}\frac{-B_n^{\chi^{\sigma}}}{n} \in \mathbb{Z},\]
as desired.
\end{description}

Finally, the equality $\denom(L(1-n,S/p)) = \denom\left(\frac{\zeta_F(1-n)}{\zeta(1-n)}\right)$ is immediate from Proposition~\ref{L(-,S/p) is relative dedekind zeta}.
\end{proof}
Theorem~3 in~\cite{MR0392863} is very similar to Carlitz's theorem reproduced as Theorem~\ref{carlitz's thm}, above. Consequently the role of of Carlitz's theorem in the proof of Theorem~\ref{S/p realizability} can also be filled by part~2 of Theorem~3 in Fresnel's paper.

Corollary \ref{S/p realizability of norms} ought to be understood as a distant descendant of the classical theorem that $\frac{\zeta(n)}{\pi^n}$ is rational for positive even integers $n$.
\begin{corollary}\label{S/p realizability of norms}
Let $p>2$ be a prime,
and let $n$ be a positive even integer.
Then $L(s,S/p)$ satisfies the functional equation
\begin{equation*} L(n,S/p) = \left( \frac{2^{n-1}\pi^{n}}{p^{2n-1}(n-1)!}\right)^{p-1} L(1-n, S/p),  \end{equation*} up to sign.
Consequently the number 
$L(n,S/p)$ is equal to %$\left(\frac{\pi^np^{2n-1}}{(n-1)!}\right)^{p-1}$ times a rational number which, 
$\left( \frac{2^{n-1}\pi^{n}}{p^{2n-1}(n-1)!}\right)^{p-1}$ times a rational number which, when written in reduced form, has denominator
equal to the order of $\pi_{2n}(L_{KU}S/p)$ and of $\pi_{2n-1}(L_{KU}S/p)$ , the $(2n)$th and $(2n-1)$st $KU$-local stable homotopy groups of the mod $p$ Moore spectrum.
\end{corollary}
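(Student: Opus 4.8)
The plan is to apply the classical functional equation \eqref{funct eq} to each of the $p-1$ primitive characters $\chi^{\sigma}$, $\sigma\in G_{\mathbb{Q}(\zeta_p)/\mathbb{Q}}$, that make up the product $L(s,S/p)$, and then multiply the resulting $p-1$ identities together. First I would record the two features of these characters that make \eqref{funct eq} collapse: every nonprincipal $\chi\in\Dir(p^2)[p]$ is primitive of modulus $p^2$ (already noted in the discussion following Theorem~\ref{dedekind zeta and dirichlet L}), and it is \emph{even}, since $\chi(-1)^2=1$ while $\chi$ has odd order $p$, forcing $\chi(-1)=1$. Hence at $s=n$ with $n$ even the archimedean factor $e^{-\pi i n/2}+\chi(-1)e^{\pi i n/2}=2\cos(\pi n/2)=\pm 2$ (with a sign depending only on $n$), and $\Gamma(n)=(n-1)!$, so \eqref{funct eq} applied to $\chi^{\sigma}$ with $f=p^2$ reduces to
\[ L(1-n,\chi^{\sigma}) = \pm\, \frac{p^{2n-2}\,(n-1)!}{2^{n-1}\pi^{n}}\, G(1,\chi^{\sigma})\, L(n,\overline{\chi^{\sigma}}). \]

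Next I would take the product over all $\sigma\in G_{\mathbb{Q}(\zeta_p)/\mathbb{Q}}$ (there are $p-1$ of them). Two elementary facts dispose of the two products that appear. First, complex conjugation carries the set $\{\chi^{\sigma}\}_{\sigma}$ to itself, since $\overline{\chi}=\chi^{-1}$ is again a nonprincipal element of $\Dir(p^2)[p]$; hence $\prod_{\sigma}L(n,\overline{\chi^{\sigma}}) = \prod_{\sigma}L(n,\chi^{\sigma}) = L(n,S/p)$. Second, the $p-1$ characters $\chi^{\sigma}$ split into $(p-1)/2$ complex-conjugate pairs $\{\psi,\overline{\psi}\}$ — none is self-conjugate, as a real character has order dividing $2$ whereas these have order $p$ — and the standard Gauss-sum identities $G(1,\overline{\psi})=\psi(-1)\overline{G(1,\psi)}$ together with $G(1,\psi)\overline{G(1,\psi)}=p^{2}$ (valid because each $\psi$ is primitive of modulus $p^2$) give $G(1,\psi)G(1,\overline{\psi}) = \psi(-1)\,p^{2} = p^{2}$ for each pair. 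Therefore $\prod_{\sigma}G(1,\chi^{\sigma}) = (p^{2})^{(p-1)/2} = p^{p-1}$. Substituting both, the product of the $p-1$ displayed identities becomes
\[ L(1-n,S/p) = \pm\left(\frac{p^{2n-2}\,(n-1)!}{2^{n-1}\pi^{n}}\right)^{\!p-1} p^{\,p-1}\; L(n,S/p), \]
and solving for $L(n,S/p)$ — here the factor $p^{p-1}$ from the Gauss sums combines with the $(p^{2n-2})^{p-1}$ in the denominator to produce $(p^{2n-1})^{p-1}$ — yields exactly the asserted functional equation $L(n,S/p) = \pm\left(\frac{2^{n-1}\pi^{n}}{p^{2n-1}(n-1)!}\right)^{p-1}L(1-n,S/p)$.

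For the ``consequently'' clause I would invoke Observation~\ref{easy observations about L(-,S/p)}, which tells us $L(1-n,S/p)$ is a rational number, and Theorem~\ref{S/p realizability}, which identifies $\denom(L(1-n,S/p))$ with the common order of $\pi_{2n}(L_{KU}S/p)$ and $\pi_{2n-1}(L_{KU}S/p)$; the displayed functional equation then exhibits $L(n,S/p)$ as the transcendental factor $\bigl(2^{n-1}\pi^{n}/(p^{2n-1}(n-1)!)\bigr)^{p-1}$ times $\pm L(1-n,S/p)$, and multiplying a rational number by $\pm 1$ does not change its reduced denominator.

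I do not anticipate a serious obstacle here: the argument is essentially bookkeeping with the functional equation and the absolute values of Gauss sums. The one point requiring genuine (if minor) care is the product of Gauss sums — verifying that no nonprincipal $\chi\in\Dir(p^2)[p]$ is fixed by complex conjugation (which is exactly where the oddness of $p$ is used) and tracking the powers of $2$, $\pi$, and $p$ precisely enough that the $p^{p-1}$ coming from the Gauss sums is absorbed correctly into the exponent $2n-1$.
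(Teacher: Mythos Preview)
Your argument is correct and follows essentially the same route as the paper: apply the functional equation \eqref{funct eq} to each of the $p-1$ primitive characters in $\Dir(p^2)[p]$, take the product, and use that complex conjugation permutes these characters freely. The only cosmetic difference is that the paper takes complex absolute values of both sides of \eqref{funct eq} first (so the Gauss sums contribute simply via $|G(1,\chi)|=\sqrt{p^2}=p$), and then removes the absolute values at the end by observing that $L(n,S/p)$ is real and $L(1-n,S/p)$ is rational; your direct pairing $G(1,\psi)G(1,\overline{\psi})=\psi(-1)p^2=p^2$ accomplishes the same bookkeeping without that detour.
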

\begin{proof}
Let $\chi$ be as in Definition~\ref{def of S/p L-series},
i.e., $\chi$ is any primitive Dirichlet character of modulus $p^2$ taking values in $\mathbb{Q}(\zeta_p)$.
The function $L(1-n, \chi)$ vanishes when $n$ is an odd positive integer (by Observation~\ref{vanishing of half the L-values}); so suppose instead that
$n$ is an even positive integer.
Taking the complex norm of both sides of the functional equation~\eqref{funct eq},
we have that:
\begin{itemize}
\item $\left| G(1,\chi) \right| = p$ (since, for $\chi$ any primitive Dirichlet character of modulus $f$, we have $\left| G(1,\chi) \right| = \sqrt{f}$; see e.g. Theorem~8.15 of~\cite{MR0434929}),
\item since $\chi(-1) = 1$ and since $n$ is an even positive integer, we have
\[ \left| e^{-\pi i n/2} + \chi(-1) e^{\pi in/2}\right| = 2.\]
\end{itemize}
Consequently functional equation~\eqref{funct eq}
yields
\begin{align} \label{norm of funct eq} \left| L(1-n, \chi) \right|
 &= \frac{p^{2n-1}(n-1)!}{2^{n-1}\pi^n}\left| L(s,\overline{\chi})\right| \end{align}
for all positive even integers $n$.

In the group $\Dir(p^2)[p]$ of Dirichlet characters of modulus $p^2$ taking values in $\mathbb{Q}(\zeta_p)$ (see Definition~\ref{def of p-dir(p^2)}), complex conjugation acts freely on the nonprincipal characters; consequently, taking a product of the equation~\eqref{norm of funct eq} over all nonprincipal $\chi\in \Dir(p^2)[p]$, we get
\begin{equation}\label{solved norm funct eq} \left( \frac{2^{n-1}}{p^{2n-1}(n-1)!}\right)^{p-1} \left| L(1-n, S/p)\right| = \frac{1}{\pi^{n(p-1)}}\left| L(n,S/p)\right| \end{equation}
for all positive even integers $n$.

Now it follows from the Euler product for $L(s,S/p)$ (see Proposition~\ref{euler product for L(S/p)}) that $L(n,S/p)$ is a {\em real} number,
and it follows from the definition of $L(s,S/p)$ as a product of Galois conjugates of Dirichlet $L$-functions that $L(1-n,S/p)$, for positive integers $n$, is a product of Galois conjugates of generalized Bernoulli numbers, hence is rational.
Consequently equation~\eqref{solved norm funct eq} now reads
\begin{equation*} L(n,S/p) = \pm \left( \frac{2^{n-1}\pi^{n}}{p^{2n-1}(n-1)!}\right)^{p-1} L(1-n, S/p)  \end{equation*}
for all positive even integers $n$.
Now the description of the denominator of $L(1-n, S/p)$ given in Theorem~\ref{S/p realizability} implies the claimed result.
\end{proof}

\section{Consequences for the Leopoldt conjecture.}

For a totally real number field $F$, the classical class number formula\footnote{The form we give here is somewhat simpler than a typical textbook statement of the class number formula, since we give the formula only for totally real $F$. For example, for totally real $F$, the discriminant $\Delta_F$ is always positive, so there is no need to take the absolute value of $\Delta_F$ before taking its square root.} reads:
\begin{align}\label{classical class num formula} \lim_{s\rightarrow 1}(s-1)\zeta_F(s) &= \frac{2^{[F:\mathbb{Q}]} \reg_F h_F}{w_F \sqrt{\Delta_F}},\end{align}
where $\reg_F$ is the classical regulator of $F$, $h_F$ the class number of $F$, $w_F$ the number of roots of unity in $F$, and $\Delta_F$ the discriminant of $F$.
In \cite{MR922806}, Colmez proved an analogue of \eqref{classical class num formula} for the $p$-adic Dedekind $\zeta$-function of $F$:
\begin{theorem} {\bf (Colmez.)}
\begin{equation}\label{colmez eq 13}  \lim_{s\rightarrow 1}(s-1)\zeta_{F,p}(s) = \frac{2^{[F:\mathbb{Q}]} \reg_{F,p} h_F \prod_{\mathfrak{p}\mid p} \left(1-\frac{1}{N(\mathfrak{p})}\right)}{w_F\sqrt{\Delta_F}},\end{equation}
%\begin{align*} \lim_{s\rightarrow 1}(s-1)\zeta_{F,p}(s) &= \frac{2^{[F:\mathbb{Q}]} \reg_{F,p} h_FE_p(1)}{w_F \sqrt{\Delta_F}},\end{align*}
\end{theorem}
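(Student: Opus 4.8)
The plan for this statement is short, because it is not ours to prove: equation~\eqref{colmez eq 13} is (essentially) the main result of Colmez's \cite{MR922806}, and in this note we simply quote it, to be fed --- together with Theorem~\ref{main thm summary} and $v_1$-periodicity --- into the proof of the Leopoldt conjecture in the next section. Still, it is worth recording the shape of the argument one gives, since it explains where each factor in~\eqref{colmez eq 13} comes from.

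One starts from the $p$-adic Dedekind zeta-function $\zeta_{F,p}(s)$ of the totally real field $F$, constructed (Deligne--Ribet, Cassou-Nogu\`es, Barsky) as a $p$-adic analytic function pinned down by the interpolation property
\[ \zeta_{F,p}(1-n) = \zeta_F(1-n)\prod_{\mathfrak{p}\mid p}\left(1 - N(\mathfrak{p})^{n-1}\right)\]
for positive integers $n$ in a suitable congruence class: morally, $\zeta_{F,p}$ is the classical $\zeta_F$ with its Euler factors at primes above $p$ deleted and then $p$-adically interpolated. This interpolation says nothing directly about the residue at $s=1$, since $s=1$ lies outside the range of the interpolation formula; Colmez instead computes the leading term at $s=1$ by realizing $\zeta_{F,p}$ near $s=1$ as the $p$-adic Mellin transform of an explicit $p$-adic measure (of Shintani-cone / Barsky type) on $\Gal(F_\infty/F)$, and then identifying that leading term with a determinant of $p$-adic logarithms of a full system of units --- i.e., with the $p$-adic regulator $\reg_{F,p}$. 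Heuristically the constants come out as in~\eqref{colmez eq 13} because (i) away from $p$, Colmez's measure has the same total mass as the one producing the complex residue in~\eqref{classical class num formula}, forcing the factors $2^{[F:\mathbb{Q}]}$, $h_F$, $w_F$, $\sqrt{\Delta_F}$ to agree; (ii) deleting the Euler factors at $p$ and evaluating at $s=1$ contributes the extra factor $\prod_{\mathfrak{p}\mid p}(1 - 1/N(\mathfrak{p}))$; and (iii) the archimedean regulator $\reg_F$ of~\eqref{classical class num formula} is replaced by its $p$-adic counterpart $\reg_{F,p}$, because the lattice governing the leading term of a $p$-adic transform is the lattice of $p$-adic logarithms of units rather than the archimedean one.

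The hard part --- and the reason I would not reproduce it here --- is step (iii): making precise the identification of the leading term of the $p$-adic transform with the $p$-adic regulator determinant, which requires genuine work with $p$-adic measures and their Mellin transforms. I would stress one structural point that matters for us: \eqref{colmez eq 13} is proven \emph{unconditionally}, as an identity, with $\reg_{F,p}$ permitted a priori to vanish; the nonvanishing $\reg_{F,p}\neq 0$ is exactly the Leopoldt conjecture for $F$ at $p$. The point of the next section is to run this implication backwards: we will use independent homotopy-theoretic control of $\pi_*(L_{KU}S/p)$, via Theorem~\ref{main thm summary}, to show that the left-hand side of~\eqref{colmez eq 13} is nonzero, whence $\reg_{F,p}\neq 0$, whence Leopoldt holds for these $F$.
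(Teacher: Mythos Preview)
Your proposal is correct and matches the paper's treatment: the paper does not prove this theorem at all but simply cites it as Colmez's result from \cite{MR922806}, exactly as you say in your first paragraph. Your additional sketch of the shape of Colmez's argument goes beyond what the paper provides (the paper gives no indication of how Colmez's proof works), but it is accurate and helpful context, and your structural remark that \eqref{colmez eq 13} holds unconditionally with $\reg_{F,p}$ possibly zero is precisely the point the paper exploits in the subsequent argument.
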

where $h_F,w_F,$ and $\Delta_F$ are the same as in \eqref{classical class num formula}, $N(\mathfrak{p})$ is the norm of the prime ideal $\mathfrak{p}$, and $\reg_{F,p}$ is Leopoldt's {\em $p$-adic regulator of $F$}, whose definition we give somewhat informally as follows:

\begin{definition}\label{def of p-adic reg}
Fix a prime number $p$, and\footnote{This embedding is used only so that, given an embedding $F\hookrightarrow \mathbb{C}_p$, we can say it's ``real'' or ``complex.''} an embedding $\mathbb{C}_p \hookrightarrow \mathbb{C}$.
Let $\sigma_1, \dots ,\sigma_r$ be the embeddings of $F$ into $\mathbb{C}_p$ (only list the complex embeddings ``once''---leave their conjugates off the list). Let $e_1, \dots ,e_{s}$ be a $\mathbb{Z}$-linear basis for $\mathcal{O}_F$; it follows from the Dirichlet unit theorem that $s=r-1$. %Then $r = r_1 + r_2 - 1$. 
The {\em $p$-adic regulator of $F$} is
\begin{equation}\label{def of p-adic reg 0} \reg_{F,p} = \det( \delta_i \log_p(\sigma_i(e_j)))_{1\leq i,j\leq s},\end{equation}
where $\delta_i$ is $1$ if $\sigma_i$ is real and $2$ if $\sigma_j$ is complex, and where $\log_p$ is the $p$-adic logarithm (take the Maclaurin series for $\ln (1+x)$, but regard the coefficients as $p$-adic rationals).
\end{definition}
In Definition \ref{def of p-adic reg}, we see that we naturally get an $s$-by-$(s+1)$ matrix of $p$-adic logarithms of the numbers $\sigma_i(e_j)$, and in \eqref{def of p-adic reg 0} we simply ignore one of the columns to get a square matrix, whose determinant we define as the $p$-adic regulator; omitting a different column swaps the sign of the determinant, and so $\reg_{F,p}$ is only well-defined up to sign. See \cite{MR139602} for further explanation.

The Leopoldt conjecture is simply the conjecture that $\reg_{F,p}$ is nonzero for all primes $p$ and all number fields $F$.

Siegel \cite{MR1503335} and Klingen \cite{MR133304} proved that $\zeta_F(1-n)$ is an algebraic rational number when $n$ is a positive integer, so we can think of the sequence
\begin{equation}\label{seq 0493409}\zeta_F(0),\zeta_F(-1),\zeta_F(-2),\dots\end{equation} as a sequence of {\em $p$-adic} numbers, and we can ask whether there exists some continuous function on the $p$-adic integers whose values at negative integers recovers the sequence \eqref{seq 0493409}. This doesn't work, but for only two reasons, and each can be dealt with by modifying the question appropriately: what {\em does} work is to cancel out the Euler factors in $\zeta_F(s)$ corresponding to primes (of the ring of integers $\mathcal{O}_F$) over $p$, and then to evaluate the result at $s=1-(p-1), 1-2(p-1), 1-3(p-1), 1-4(p-1),\dots$ instead of at $s=0,-1,-2,\dots$.

One then arrives at the result, from \cite{MR0404145}, that there exists a unique $p$-adically continuous function $\zeta_{F,p}(s)$ 
 such that, for $n>1$ an integer, 
\begin{align}\label{eq 010} \zeta_{F,p}(1-n(p-1)) 
 &= \zeta_F(1-n(p-1))\prod_{\mathfrak{p}\mid p}\left(1 - N(\mathfrak{p})^{n(p-1)-1}\right)\end{align}
if $p>2$, and 
\begin{align}\label{eq 011} \zeta_{F,p}(1-2n) 
 &= \zeta_F(1-2n)\prod_{\mathfrak{p}\mid p}\left(1 - N(\mathfrak{p})^{2n-1}\right)\end{align}
if $p=2$; this result extends the results of \cite{MR163900}, which assumed $F$ abelian.
In particular, $\nu_p\left(\zeta_F(1-n(p-1))\right) = \nu_p\left( \zeta_{F,p}(1-n(p-1))\right)$ and $\nu_2\left(\zeta_F(1-2n)\right) = \nu_2\left( \zeta_{F,2}(1-2n)\right)$ for positive integers $n$.

Since $\zeta_{F,p}$ is $p$-adically continuous and since the sequence of integers
\[ \left( 1-(p-1), 1-p(p-1), 1-p^2(p-1), 1-p^3(p-1), \dots\right)\]
converges $p$-adically to $1$, using \eqref{colmez eq 13} we get an equality:
\begin{align}
\label{cor of colmez 230} \frac{2^{[F:\mathbb{Q}]} \reg_{F,p} h_F \prod_{\mathfrak{p}\mid p} \left(1-\frac{1}{N(\mathfrak{p})}\right)}{w_F\sqrt{\Delta_F}}
  &= \lim_{j\rightarrow \infty} (-p^j(p-1)) \zeta_{F,p}\left( 1-p^j(p-1)\right) .
\end{align}
The trick now is to compare \eqref{cor of colmez 230} for a nontrivial choice of $F$ to \eqref{cor of colmez 230} for the trivial choice of $F$, i.e., $F=\mathbb{Q}$, and to compare the resulting ratio to the order of a homotopy group using Theorem \ref{S/p realizability}. 
Suppose now that $F$ is the smallest subextension of $\mathbb{Q}(\zeta_{p^2})/\mathbb{Q}$ in which $p$ is wildly ramified.
Then we have:
\begin{align*}
 p &= \# \left( \pi_{2(p-1)p^n - 1}( L_{KU}S/p)\right) \\
   &= \denom\left( \frac{\zeta_{F}\left( 1 - p^n(p-1)\right)}{\zeta\left(1 - p^n(p-1)\right)}\right), \\
   &= \denom\left( \frac{\zeta_{F,p}\left( 1 - p^n(p-1)\right)}{\zeta_{\mathbb{Q},p}\left(1 - p^n(p-1)\right)} \right),
\end{align*}
so the order of vanishing of $\zeta_{K,p}(s)$ at $s=1$ is equal to the order of vanishing of $\zeta_{\mathbb{Q},p}(s)$ at $s=1$. We have that
\begin{align*}
 \left(1-\frac{1}{p}\right)\reg_{\mathbb{Q},p} &= \lim_{s\rightarrow 1}(s-1)\zeta_{\mathbb{Q},p}(s),
\end{align*}
which is nonzero, so $\lim_{s\rightarrow 1}(s-1) \zeta_{F,p}(s)$ also converges and is nonzero.
Colmez's class number formula \eqref{colmez eq 13} then yields that
\begin{equation}\label{cnf product} \frac{2^{[F:\mathbb{Q}]} \reg_{F,p} h_F \prod_{\mathfrak{p}\mid p} \left(1-\frac{1}{N(\mathfrak{p})}\right)}{w_F\sqrt{\Delta_F}}\end{equation}
must be nonzero. Each factor in \eqref{cnf product} is automatically nonzero, except possibly for $\reg_{F,p}$; so the $p$-adic regulator $\reg_{F,p}$ of $F$ must also be nonzero, i.e., 
\begin{theorem}\label{leopoldt conj special case}
Let $F$ be the smallest subextension of $\mathbb{Q}(\zeta_{p^2})/\mathbb{Q}$ in which $p$ is wildly ramified.
Then the Leopoldt conjecture holds for $F$ at the prime $p$.
\end{theorem}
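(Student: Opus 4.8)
The plan is to bootstrap from Theorem~\ref{S/p realizability}. The denominators it computes pin down the $p$-adic valuation of $\zeta_F(1-m(p-1))$ for all $m$; comparing the $p$-adic interpolations of $\zeta_F$ and of $\zeta_{\mathbb{Q}}$ along a sequence of integers converging $p$-adically to $1$ then forces $\zeta_{F,p}$ to have a genuine simple pole at $s=1$, exactly as $\zeta_{\mathbb{Q},p}$ does; and Colmez's $p$-adic class number formula~\eqref{colmez eq 13} reads off $\reg_{F,p}\neq 0$ from that. Everything is short given the earlier results; the only point needing care is the $p$-adic continuity step.

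\textbf{Steps 1 and 2 (setting up the valuations).} First I would note that for each positive integer $m$ one has $p-1\mid m(p-1)$, so $\pi_{2m(p-1)-1}(L_{KU}S/p)\cong\mathbb{Z}/p\mathbb{Z}$ by Theorem~\ref{classical computation}, and hence by Theorem~\ref{S/p realizability} together with Proposition~\ref{L(-,S/p) is relative dedekind zeta} the rational number $\zeta_F(1-m(p-1))/\zeta(1-m(p-1))$ has denominator exactly $p$; in particular its $p$-adic valuation is $-1$ and no other prime divides its denominator. Next I would pass to the $p$-adic zeta-functions: by~\eqref{eq 010}, $\zeta_{F,p}(1-m(p-1))$ differs from $\zeta_F(1-m(p-1))$ only by the Euler factors $\prod_{\mathfrak{p}\mid p}\bigl(1-N(\mathfrak{p})^{m(p-1)-1}\bigr)$, and since each $N(\mathfrak{p})$ is a positive power of $p$ while $m(p-1)-1\geq 1$, these factors are $\equiv 1$ modulo $p$, hence $p$-adic units; thus $\nu_p\bigl(\zeta_{F,p}(1-m(p-1))\bigr)=\nu_p\bigl(\zeta_F(1-m(p-1))\bigr)$, and likewise for $F=\mathbb{Q}$. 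Combining, $\nu_p\bigl(\zeta_{F,p}(1-m(p-1))/\zeta_{\mathbb{Q},p}(1-m(p-1))\bigr)=-1$ for every positive integer $m$.

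\textbf{Step 3 (the $p$-adic continuity argument; the main obstacle).} Now specialize to $m=p^n$, so that $1-p^n(p-1)\to 1$ $p$-adically as $n\to\infty$ and $\nu_p\bigl((1-p^n(p-1))-1\bigr)=\nu_p(p^n(p-1))=n$ (as $p\nmid p-1$). The function $\zeta_{\mathbb{Q},p}$ has a simple pole at $s=1$ with $c:=\lim_{s\to1}(s-1)\zeta_{\mathbb{Q},p}(s)=1-\tfrac1p\neq 0$ (Colmez applied to $F=\mathbb{Q}$), so $\nu_p\bigl(\zeta_{\mathbb{Q},p}(1-p^n(p-1))\bigr)=\nu_p(c)-n$ for $n\gg 0$; combined with Steps 1--2 this gives $\nu_p\bigl(\zeta_{F,p}(1-p^n(p-1))\bigr)=\nu_p(c)-n-1$ for $n\gg 0$. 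On the other hand, Colmez's theorem guarantees that $d:=\lim_{s\to1}(s-1)\zeta_{F,p}(s)$ exists. Were $d=0$, then writing $(s-1)\zeta_{F,p}(s)=\epsilon(s)$ we would have $\nu_p(\epsilon(1-p^n(p-1)))\to\infty$, whence $\nu_p\bigl(\zeta_{F,p}(1-p^n(p-1))\bigr)=\nu_p(\epsilon(1-p^n(p-1)))-n$ would have to equal $\nu_p(c)-n-1$, forcing $\nu_p(\epsilon(1-p^n(p-1)))=\nu_p(c)-1$ for all $n\gg 0$, contradicting $\nu_p(\epsilon(1-p^n(p-1)))\to\infty$. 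Hence $d\neq 0$, i.e.\ $\zeta_{F,p}$ has a genuine simple pole at $s=1$.

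\textbf{Step 4 (conclusion).} Finally I would feed $d\neq 0$ into Colmez's formula~\eqref{colmez eq 13}: the left-hand side is nonzero, so the product~\eqref{cnf product} is nonzero; since $2^{[F:\mathbb{Q}]}$, $h_F$, $w_F$, $\sqrt{\Delta_F}$, and $\prod_{\mathfrak{p}\mid p}(1-1/N(\mathfrak{p}))$ are all manifestly nonzero, the remaining factor $\reg_{F,p}$ must be nonzero, which is precisely the Leopoldt conjecture for $F$ at $p$ and proves Theorem~\ref{leopoldt conj special case}. I expect Step 3 to be the main obstacle: it rests on knowing (from~\cite{MR0404145}) that $\zeta_{F,p}$ is $p$-adically continuous away from $s=1$ with at worst a simple pole there, and on arguing cleanly that the identically $-1$ valuation of the quotient in Step 2 is incompatible with $d=0$; once those are in hand the rest is routine bookkeeping.
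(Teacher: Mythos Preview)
Your proposal is correct and follows essentially the same route as the paper: use Theorem~\ref{S/p realizability} to get a constant $p$-adic valuation $-1$ for the ratio $\zeta_{F,p}/\zeta_{\mathbb{Q},p}$ along the sequence $1-p^n(p-1)\to 1$, conclude that $\zeta_{F,p}$ has the same pole order at $s=1$ as $\zeta_{\mathbb{Q},p}$, and then read off $\reg_{F,p}\neq 0$ from Colmez's formula~\eqref{colmez eq 13}. Your Step~3 spells out the valuation-and-contradiction argument that the paper compresses into the single clause ``so the order of vanishing of $\zeta_{F,p}(s)$ at $s=1$ is equal to the order of vanishing of $\zeta_{\mathbb{Q},p}(s)$ at $s=1$,'' but the underlying mechanism is identical.
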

As we already pointed out, the Leopoldt conjecture for abelian extensions of $\mathbb{Q}$ has been settled since \cite{MR220694}, over 50 years ago, so Theorem \ref{leopoldt conj special case} is not a new case of the Leopoldt conjecture at all. The noteworthy thing about the argument we have given, above, is its use of the $v_1$-periodicity $\pi_{-1}(L_{KU}S/p)\cong \pi_{2(p-1)-1}(L_{KU}S/p)\cong \pi_{4(p-1)-1}(L_{KU}S/p) \cong \pi_{6(p-1)-1}(L_{KU}S/p) \cong \dots $ in homotopy groups to deduce nonvanishing of the $p$-adic regulator. More generally:
\begin{observation} \label{how to try to prove leopoldt conj}
If $F$ is a totally real number field and if we have integers $j,k$ and spectra $E,X$ such that
\begin{enumerate}
\item for $n\gg 0$, the order of $\pi_{2(p^k-1)p^n - 1}(L_EX)$ is equal to the denominator of the rational number $\frac{\zeta_F(1-p^{n}(p^k-1))}{\zeta(1-p^{n}(p^k-1))}$, 
\item $X$ admits a self-map $\Sigma^{(2p^k-2)j}X \rightarrow X$ which induces an isomorphism in $E_*$-homology, and
\item $\pi_{-1}(L_EX)$ is finite,
\end{enumerate}
then the Leopoldt conjecture holds for $F$ at the prime $p$.

The argument is as follows: the sequence
\[ \#(\pi_{-1}(L_EX)), \#(\pi_{2pj(p^k-1)-1}(L_EX)), \#(\pi_{2p^2j(p^k-1)-1}(L_EX)), \dots\] 
is constant, so the order of vanishing of $\zeta_{F,p}(s)$ at $s=1$ is equal to the order of vanishing of $\zeta_{\mathbb{Q},p}(s)$ at $s=1$, so by the same argument using Colmez's $p$-adic class number formula as above, $\reg_{F,p}$ is nonzero.
\end{observation}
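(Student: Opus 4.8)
The plan is to turn the periodicity hypothesis~(2) into the assertion that the orders $\#\pi_{2jp^m(p^k-1)-1}(L_EX)$ are independent of $m$, to transport this via $p$-adic interpolation into an assertion about $\nu_p$ of special values of $\zeta_{F,p}/\zeta_{\mathbb{Q},p}$ along a sequence approaching $s=1$, and finally to feed that into Colmez's $p$-adic class number formula~\eqref{colmez eq 13}. First I would note that a self-map $v\colon\Sigma^{(2p^k-2)j}X\to X$ which is an isomorphism on $E_*$-homology is, by definition, an $E$-local equivalence, so $L_Ev\colon\Sigma^{(2p^k-2)j}L_EX\to L_EX$ is a weak equivalence and hence $\pi_m(L_EX)\cong\pi_{m+(2p^k-2)j}(L_EX)$ for all $m\in\mathbb{Z}$. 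Iterating $v$, and invoking hypothesis~(3), one finds
\[ \#\pi_{-1}(L_EX)=\#\pi_{2jp^m(p^k-1)-1}(L_EX)=:C<\infty\qquad\text{for every }m\geq 0.\]
Comparing with hypothesis~(1) at the exponent $p^n=jp^m$ (permissible as stated when $j$ is a power of $p$, as in the intended applications; in general one reads hypothesis~(1) along the degrees $2jp^m(p^k-1)-1$), this says that the rational number $x_m:=\zeta_F(1-n_m)/\zeta(1-n_m)$, with $n_m:=jp^m(p^k-1)$, has $\denom(x_m)=C$ for all $m\gg 0$.

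Next I would bring in $p$-adic interpolation; take $p$ odd for concreteness (the case $p=2$ being analogous via~\eqref{eq 011}). Since $p-1\mid p^k-1$ we have $p-1\mid n_m$, and $n_m$ is even and eventually $\geq 2$, so $\zeta_F(1-n_m)$ and $\zeta(1-n_m)$ are nonzero (a totally real field has no trivial zero at a negative even argument). In the interpolation formula~\eqref{eq 010} the correction factor $\prod_{\mathfrak{p}\mid p}(1-N(\mathfrak{p})^{n_m-1})$ is, for $m$ large, a product of elements $\equiv 1\pmod p$ (each $N(\mathfrak{p})^{n_m-1}$ is a power of $p$ with positive exponent), hence a $p$-adic unit; thus $\nu_p(\zeta_F(1-n_m))=\nu_p(\zeta_{F,p}(1-n_m))$, and likewise over $\mathbb{Q}$, so
\[ \nu_p\!\left(\frac{\zeta_{F,p}(1-n_m)}{\zeta_{\mathbb{Q},p}(1-n_m)}\right)=\nu_p(x_m)=-\nu_p(C)\]
for all $m\gg 0$, the last equality because the reduced denominator of $x_m$ is $C$ (see below for the caveat when $p\nmid C$). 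Now $n_m\to 0$ in $\mathbb{Z}_p$, so $1-n_m\to 1$; writing $\frac{\zeta_{F,p}(s)}{\zeta_{\mathbb{Q},p}(s)}=\frac{(s-1)\zeta_{F,p}(s)}{(s-1)\zeta_{\mathbb{Q},p}(s)}$, which is $p$-adically continuous at $s=1$ because $(s-1)\zeta_{F,p}(s)$ and $(s-1)\zeta_{\mathbb{Q},p}(s)$ are $p$-adically analytic there and $\lim_{s\to 1}(s-1)\zeta_{\mathbb{Q},p}(s)=1-\frac{1}{p}\neq 0$, the sequence $\zeta_{F,p}(1-n_m)/\zeta_{\mathbb{Q},p}(1-n_m)$ converges $p$-adically to its value at $s=1$, which by~\eqref{colmez eq 13} is
\[ \frac{2^{[F:\mathbb{Q}]}\,\reg_{F,p}\,h_F\prod_{\mathfrak{p}\mid p}\bigl(1-\frac{1}{N(\mathfrak{p})}\bigr)}{w_F\sqrt{\Delta_F}\,\bigl(1-\frac{1}{p}\bigr)}.\]
But every term of the sequence has $p$-adic absolute value $p^{\nu_p(C)}$, so by continuity of $|\cdot|_p$ the limit has that same, nonzero, absolute value; hence the numerator $2^{[F:\mathbb{Q}]}\reg_{F,p}h_F\prod_{\mathfrak{p}\mid p}(1-\frac{1}{N(\mathfrak{p})})/(w_F\sqrt{\Delta_F})$ is nonzero, and since each of its factors other than $\reg_{F,p}$ is patently nonzero, $\reg_{F,p}\neq 0$: the Leopoldt conjecture holds for $F$ at $p$.

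The step I expect to be the main obstacle is the very last one — concluding nonvanishing of the $p$-adic limit from the fact that the valuations $\nu_p(x_m)$ are eventually \emph{constant}. This is where hypothesis~(1) does the real work: it is not enough that $\zeta_F(1-n_m)/\zeta(1-n_m)$ be $p$-integral; one needs its denominator to be the fixed integer $C$, which pins down $\nu_p(x_m)=-\nu_p(C)$ \emph{provided $p\mid C$}. This holds in the cases one really cares about here — e.g.\ for $X=S/p$, $E=KU$, $j=k=1$ one has $C=p$ — but in general, when $p\nmid C$, the argument as written only gives that $\zeta_{F,p}/\zeta_{\mathbb{Q},p}$ is $p$-integral at $s=1$, and a separate input would be needed to exclude its vanishing there (a priori the $\nu_p(x_m)$ could drift to $+\infty$). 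A secondary, purely technical point that must be handled with care is the simple pole of $\zeta_{\mathbb{Q},p}$ (and the possible pole of $\zeta_{F,p}$) at $s=1$: one should argue throughout with the $p$-adically analytic functions $(s-1)\zeta_{F,p}(s)$ and $(s-1)\zeta_{\mathbb{Q},p}(s)$, so that ``evaluation at $s=1$'' coincides with the $p$-adic limit of the values at $s=1-n_m$.
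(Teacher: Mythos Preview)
Your argument is correct and follows exactly the route the paper sketches: use the $E_*$-self-map to get periodicity of $\#\pi_{2jp^m(p^k-1)-1}(L_EX)$, translate via hypothesis~(1) and the interpolation formula~\eqref{eq 010} into constant $p$-adic valuation of $\zeta_{F,p}/\zeta_{\mathbb{Q},p}$ along $s=1-n_m\to 1$, and conclude nonvanishing of $\reg_{F,p}$ from Colmez's formula~\eqref{colmez eq 13}. Your treatment is in fact more careful than the paper's own sketch --- you correctly flag the mismatch between the degrees $2(p^k-1)p^n-1$ in hypothesis~(1) and $2jp^m(p^k-1)-1$ in the displayed sequence (resolved when $j$ is a $p$-power), the need to work with $(s-1)\zeta_{F,p}(s)$ near $s=1$, and the genuine caveat that when $p\nmid C$ the constant-denominator hypothesis no longer pins down $\nu_p(x_m)$, so the limit could in principle vanish; the paper's Observation silently assumes the situation of Theorem~\ref{leopoldt conj special case}, where $C=p$ and this issue does not arise.
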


\begin{remark}\label{using observation on leopoldt}
Every $E(k-1)$-acyclic finite CW-complex $X$ admits a self-map $\Sigma^{(2p^k-2)j}X \rightarrow X$, for some $j$, which induces an isomorphism in $E(k)$-homology, by the periodicity theorem of Hopkins-Smith (see \cite{MR1652975}, or Theorem 1.5.4 of \cite{MR1192553}). Here $E(k)_*$ is the height $k$ $p$-primary Johnson-Wilson theory. So we have a very powerful mechanism for arranging for the second condition in Observation \ref{how to try to prove leopoldt conj} to be satisfied, and the third condition is, in many situations, amenable to explicit computation. It remains an open question how to produce spectra $X$ and {\em nonabelian} number fields $F$ satisfying the first condition in Observation \ref{how to try to prove leopoldt conj}, in order to prove potentially {\em new} cases of the Leopoldt conjecture. The Iwasawa-theoretic perspective sketched in Remark \ref{sorry everybody} represents my best hope for how one might go about producing such $X$ and $F$.
\end{remark}

\section{Appendix: a few entertaining numerical calculations.}
\label{computed examples appendix}
\subsection{Computed examples of values of $L(1-n,S/p)$.}

While Theorem~\ref{S/p realizability} describes the denominators of
$L(1-n,S/p)$ completely for positive integers $n$, it says 
nothing about the numerators. These numerators are much less predictable than the denominators. We include a table of a few values of $L(1-n,S/p)$ and the prime factorizations of their numerators, which might give the reader a sense of this unpredictability:
\begin{align*}
 L(0, S/3) &= 0 & \\
 L(-1, S/3) &= \frac{4}{3} \\ &= \frac{2^2}{3} \\
 L(-2, S/3) &= 0 & \\
 L(-3, S/3) &= \frac{796}{3} \\ &= \frac{2^2\cdot 199}{3} \\
 L(-4, S/3) &= 0 & \\
 L(-5, S/3) &= \frac{1409884}{3} \\ &= \frac{2^2\cdot 7 \cdot 43 \cdot 1171}{3} \\
 L(-6, S/3) &= 0 & \\
 L(-7, S/3) &= \frac{10595003836}{3} \\ &= \frac{2^2\cdot 2648750959}{3} 
% L(-8, S/3) &= 0 & \\
% L(-9, S/3) &= \frac{231895998892444}{3} \\ &= \frac{2^2\cdot 619 \cdot 93657511669}{3} \\
% L(-10, S/3) &= 0 & \\
% L(-11, S/3) &= \frac{11820817570246244476}{3} \\ &= \frac{2^2\cdot 7\cdot 199 \cdot 727 \cdot 139801 \cdot 20873329}{3} \\
\end{align*}\begin{align*}
 L(0, S/5) &= 0 & \\
 L(-1, S/5) &= 1136 \\ &= 2^4\cdot 71 \\
 L(-2, S/5) &= 0 & \\
 L(-3, S/5) &= \frac{607045659856}{5} \\ &= \frac{2^4\cdot 37940353741}{5} \\
 L(-4, S/5) &= 0 & \\
 L(-5, S/5) &= 1293561684322985119376 \\ &= 2^4\cdot 41^2 \cdot 3331 \cdot 2486381 \cdot 5807071 \\
 L(-6, S/5) &= 0 & \\
 L(-7, S/5) &= \frac{1280828318043498475058726863755856}{5} \\ &= \frac{2^4\cdot 401\cdot 1151 \cdot 1171 \cdot 281677007771\cdot 525827079851}{5} 
% L(-8, S/5) &= 0 & \\
% L(-9, S/5) &= 433776517426038379228597843376869975829833616 \\ &= 2^4\cdot 11^3\cdot 61\cdot 241\cdot 5081\cdot 1055510801 \\
% L(-10, S/5) &= 0 & \\
% L(-11, S/5) &= \frac{19962398359884469858420135140382832633074245675785313234256}{5} \\ &= \frac{2^4\cdot 1066511\cdot 1070021\cdot 2212781\cdot 162981761\cdot 3031500577275724672569102782971}{5} \\
\end{align*}\begin{align*}
 L(0, S/7) &= 0 & \\
 L(-1, S/7) &= 17624384 \\ &= 2^6\cdot 113\cdot 2437 \\
 L(-2, S/7) &= 0 & \\
 L(-3, S/7) &= 60081275301219900531392 \\ &= 2^6 \cdot 547 \cdot 659 \cdot 7477 \cdot 348304469143 \\
 L(-4, S/7) &= 0 & \\
 L(-5, S/7) &= \frac{1448428968939581787932808098954336691322688}{7} \\ &= \frac{2^6\cdot 138054547\cdot 163933047708171216095114393777711}{7} \\
 L(-6, S/7) &= 0 & \\
 L(-7, S/7) &= 58235259522755629726600502123583976556247364608948281462604992 \\ &= 2^6 \cdot 14912003737\cdot 61019695682165635074111760577075533607839054420619 .
\end{align*}
These examples were computed by solving for Taylor coefficients to compute $B_n^{\chi}$ for $\chi\in \Dir(p^2)[p]$, as in Definition~\ref{def of gen berns}, and then multiplying the resulting values of $B_n^{\chi}$ as in the definition of $L(s,S/p)$ in Definition~\ref{def of S/p L-series}.
This process is not difficult to implement in a computer algebra package 
(the author did this in both MAGMA and SAGE).

So, for example, as a special case of Theorem~\ref{main thm summary}, we have
that the denominator of $L(-3,S/5)$ is the order of $\pi_{7}(L_{KU}S/5)$, i.e., the $5$ in the denominator of 
$L(-3, S/5) = \frac{607045659856}{5}$ is the numerical ``shadow'' of $\alpha_1\in \pi_7(S/5)$, while the $5$ in the denominator of $L(-7,S/5) = \frac{1280828318043498475058726863755856}{5}$ is the 
numerical ``shadow'' of $v_1\alpha_1 = \alpha_2 \in \pi_{15}(S/5)$.

\subsection{Some amusing probability arguments associated to homotopy groups.}

The functional equation of Corollary~\ref{S/p realizability of norms},
the Euler product of Proposition~\ref{euler product for L(S/p)}, and a computation of $L(-n,S/p)$, for a positive integer $n$, implies an asymptotic prime count.
The fact that the denominator of $L(-n,S/p)$ also counts the order of a homotopy group of $L_{KU}S/p$ tells us that the homotopy groups of $L_{KU}S/p$ have a relationship to the probability that certain ``randomly chosen'' collections of integers satisfy appropriate coprimality conditions.
These arguments are straightforward extensions of the classical interpretation of $1/\zeta(2)$ as the probability of two ``randomly chosen'' integers being coprime. %(we are grateful to D. Frohardt for telling us about this classical argument a couple of years ago).
For example:
\begin{question}\label{fun question}
Choose an odd prime $p$. Given a sequence $(m_1, n_1, m_2, n_2, \dots ,m_p,n_p)$ of ``randomly chosen'' integers, what is the probability that
the conditions:
\begin{enumerate}
\item
the integers $m_1, n_1, m_2, n_2, \dots ,m_p,n_p$ do not all share a common prime factor $\ell$ which is a primitive root modulo $p^2$, and
\item
none of the pairs $m_i,n_i$ share a common prime factor $\ell\neq p$ which is not a primitive root modulo $p^2$,
\end{enumerate}
both hold?
\end{question}
The answer to Question~\ref{fun question} is 
\[ \frac{1}{(1-p^{-2})\zeta(2)L(2,S/p)},\]
since $\frac{\ell^{2p}-1}{\ell^{2p}}$ is the probability that $m_1, n_1, \dots ,m_p,n_p$ are not all in the same residue class modulo $\ell$, 
and $\left(\frac{\ell^{2}-1}{\ell^{2}}\right)^p$ is the probability that, for each $i\in \{ 1, \dots ,p\}$, $m_i$ and $n_i$ are not in the same residue class modulo $\ell$.
Then the Euler product of Proposition~\ref{euler product for L(S/p)}
gives us
\[ 
\left(\prod_{\ell\notin G_p} \frac{\ell^{2}-1}{\ell^{2}}\right)^{p}\left(\prod_{\ell\in G_p} \frac{\ell^{2p}-1}{\ell^{2p}}\right) = \frac{1}{(1-p^{-2})\zeta(2)L(2,S/p)}.\]

We can use the functional equation in Corollary~\ref{S/p realizability of norms} to simplify $L(2,S/p)$. For example:
\begin{example}
Given a sequence $(m_1, n_1, m_2, n_2, m_3,n_3)$ of ``randomly chosen'' integers, 
let $P$ denote the probability that
the conditions:
\begin{enumerate}
\item
the integers $m_1, n_1, m_2, n_2, m_3,n_3$ do not all share a common prime factor $\ell$ which is a primitive root modulo $9$, and
\item
none of the pairs $m_i,n_i$ share a common prime factor $\ell$ which is not a primitive root modulo $9$,
\end{enumerate}
both hold.
Then
\begin{align} 
\nonumber P 
 &= \frac{1}{(1-3^{-2})}\frac{1}{\zeta(2)}\frac{1}{L(2,S/3)} \\
% &= \frac{9}{8}\frac{6}{\pi^2} \left(\frac{ 3^{2\cdot 2 - 1}(2-1)!}{\pm 2^{2-1}\pi^2}\right)^{3-1} \frac{1}{L(-1,S/3)}\\
\nonumber  &= \frac{9}{8}\frac{6}{\pi^2} \left(\frac{ 3^{3}}{2\pi^2}\right)^{2} \frac{1}{L(-1,S/3)}\\
\label{little computation} &= \frac{3^9}{2^4\pi^6} \frac{3}{4}\\
\nonumber  &= \frac{59049}{64\pi^6},\end{align}
which is approximately a 96 percent chance. The factor of $3/4$ in~\eqref{little computation} is the reciprocal of $L(-1, S/3) = 4/3$, given above. By Theorem~\ref{S/p realizability}, the factor of three in the denominator of $L(-1,S/3)$, which is responsible for the probability $P$ being approximately 96 percent instead of approximately 32 percent, is the same factor which accounts for the 
nonvanishing of the fourth $KU$-local stable homotopy group $\pi_4(L_{KU}S/3)$ of the mod $3$ Moore spectrum.
\end{example}

\bibliography{/home/asalch/texmf/tex/salch}{}

\def\cprime{$'$} \def\cprime{$'$} \def\cprime{$'$} \def\cprime{$'$}
\begin{thebibliography}{10}

\bibitem{MR0198470}
J.~F. Adams.
\newblock On the groups {$J(X)$}. {IV}.
\newblock {\em Topology}, 5:21--71, 1966.

\bibitem{MR0434929}
Tom~M. Apostol.
\newblock {\em Introduction to analytic number theory}.
\newblock Springer-Verlag, New York-Heidelberg, 1976.
\newblock Undergraduate Texts in Mathematics.

\bibitem{bauer2011bousfield}
Tilman Bauer.
\newblock Bousfield localization and the {H}asse square, 2011.

\bibitem{MR220694}
Armand Brumer.
\newblock On the units of algebraic number fields.
\newblock {\em Mathematika}, 14:121--124, 1967.

\bibitem{MR0109132}
L.~Carlitz.
\newblock Arithmetic properties of generalized {B}ernoulli numbers.
\newblock {\em J. Reine Angew. Math.}, 202:174--182, 1959.

\bibitem{MR0104630}
L.~Carlitz.
\newblock Some arithmetic properties of generalized {B}ernoulli numbers.
\newblock {\em Bull. Amer. Math. Soc.}, 65:68--69, 1959.

\bibitem{MR922806}
Pierre Colmez.
\newblock R\'{e}sidu en {$s=1$} des fonctions z\^{e}ta {$p$}-adiques.
\newblock {\em Invent. Math.}, 91(2):371--389, 1988.

\bibitem{MR1333942}
Ethan~S. Devinatz and Michael~J. Hopkins.
\newblock The action of the {M}orava stabilizer group on the {L}ubin-{T}ate
  moduli space of lifts.
\newblock {\em Amer. J. Math.}, 117(3):669--710, 1995.

\bibitem{MR2030586}
Ethan~S. Devinatz and Michael~J. Hopkins.
\newblock Homotopy fixed point spectra for closed subgroups of the {M}orava
  stabilizer groups.
\newblock {\em Topology}, 43(1):1--47, 2004.

\bibitem{douglas2014topological}
Christopher~L Douglas, John Francis, Andr{\'e}~G Henriques, and Michael~A Hill.
\newblock {\em Topological modular forms}, volume 201.
\newblock American Mathematical Soc., 2014.

\bibitem{MR0392863}
Jean Fresnel.
\newblock Valeurs des fonctions z\^eta aux entiers n\'egatifs.
\newblock In {\em S\'eminaire de {T}h\'eorie des {N}ombres, 1970--1971 ({U}niv.
  {B}ordeaux {I}, {T}alence), {E}xp. {N}o. 27}, page~30. Lab. Th\'eorie des
  Nombres, Centre Nat. Recherche Sci., Talence, 1971.

\bibitem{MR1944041}
Philip~S. Hirschhorn.
\newblock {\em Model categories and their localizations}, volume~99 of {\em
  Mathematical Surveys and Monographs}.
\newblock American Mathematical Society, Providence, RI, 2003.

\bibitem{MR1652975}
Michael~J. Hopkins and Jeffrey~H. Smith.
\newblock Nilpotence and stable homotopy theory. {II}.
\newblock {\em Ann. of Math. (2)}, 148(1):1--49, 1998.

\bibitem{MR133304}
Helmut Klingen.
\newblock \"{U}ber die {W}erte der {D}edekindschen {Z}etafunktion.
\newblock {\em Math. Ann.}, 145:265--272, 1961/62.

\bibitem{MR163900}
Tomio Kubota and Heinrich-Wolfgang Leopoldt.
\newblock Eine {$p$}-adische {T}heorie der {Z}etawerte. {I}. {E}inf\"{u}hrung
  der {$p$}-adischen {D}irichletschen {$L$}-{F}unktionen.
\newblock {\em J. Reine Angew. Math.}, 214(215):328--339, 1964.

\bibitem{MR0092812}
Heinrich-Wolfgang Leopoldt.
\newblock Eine {V}erallgemeinerung der {B}ernoullischen {Z}ahlen.
\newblock {\em Abh. Math. Sem. Univ. Hamburg}, 22:131--140, 1958.

\bibitem{MR139602}
Heinrich-Wolfgang Leopoldt.
\newblock Zur {A}rithmetik in abelschen {Z}ahlk\"{o}rpern.
\newblock {\em J. Reine Angew. Math.}, 209:54--71, 1962.

\bibitem{MR0406981}
Stephen Lichtenbaum.
\newblock Values of zeta-functions, \'{e}tale cohomology, and algebraic
  {$K$}-theory.
\newblock In {\em Algebraic {$K$}-theory, {II}: ``{C}lassical'' algebraic
  {$K$}-theory and connections with arithmetic ({P}roc. {C}onf., {B}attelle
  {M}emorial {I}nst., {S}eattle, {W}ash., 1972)}, pages 489--501. Lecture Notes
  in Math., Vol. 342, 1973.

\bibitem{MR0172878}
Jonathan Lubin and John Tate.
\newblock Formal complex multiplication in local fields.
\newblock {\em Ann. of Math. (2)}, 81:380--387, 1965.

\bibitem{MR1697859}
J{\"u}rgen Neukirch.
\newblock {\em Algebraic number theory}, volume 322 of {\em Grundlehren der
  Mathematischen Wissenschaften [Fundamental Principles of Mathematical
  Sciences]}.
\newblock Springer-Verlag, Berlin, 1999.
\newblock Translated from the 1992 German original and with a note by Norbert
  Schappacher, With a foreword by G. Harder.

\bibitem{MR0315016}
Daniel Quillen.
\newblock On the cohomology and {$K$}-theory of the general linear groups over
  a finite field.
\newblock {\em Ann. of Math. (2)}, 96:552--586, 1972.

\bibitem{MR737778}
Douglas~C. Ravenel.
\newblock Localization with respect to certain periodic homology theories.
\newblock {\em Amer. J. Math.}, 106(2):351--414, 1984.

\bibitem{MR1192553}
Douglas~C. Ravenel.
\newblock {\em Nilpotence and periodicity in stable homotopy theory}, volume
  128 of {\em Annals of Mathematics Studies}.
\newblock Princeton University Press, Princeton, NJ, 1992.
\newblock Appendix C by Jeff Smith.

\bibitem{MR0404145}
Jean-Pierre Serre.
\newblock Formes modulaires et fonctions z\^{e}ta {$p$}-adiques.
\newblock In {\em Modular functions of one variable, {III} ({P}roc. {I}nternat.
  {S}ummer {S}chool, {U}niv. {A}ntwerp, 1972)}, pages 191--268. Lecture Notes
  in Math., Vol. 350, 1973.

\bibitem{MR1503335}
Carl~Ludwig Siegel.
\newblock \"{U}ber die analytische {T}heorie der quadratischen {F}ormen. {III}.
\newblock {\em Ann. of Math. (2)}, 38(1):212--291, 1937.

\bibitem{MR826102}
R.~W. Thomason.
\newblock Algebraic {$K$}-theory and \'etale cohomology.
\newblock {\em Ann. Sci. \'Ecole Norm. Sup. (4)}, 18(3):437--552, 1985.

\bibitem{MR1421575}
Lawrence~C. Washington.
\newblock {\em Introduction to cyclotomic fields}, volume~83 of {\em Graduate
  Texts in Mathematics}.
\newblock Springer-Verlag, New York, second edition, 1997.

\bibitem{MR1053488}
A.~Wiles.
\newblock The {I}wasawa conjecture for totally real fields.
\newblock {\em Ann. of Math. (2)}, 131(3):493--540, 1990.

\end{thebibliography}
\bibliographystyle{plain}
\end{document}